\documentclass[12pt, a4paper]{article}

\usepackage[utf8]{inputenc}      
\usepackage[T1]{fontenc}         
\usepackage[english]{babel}      
\usepackage{amsmath, amsthm, amssymb, amsfonts, mathrsfs} 
\usepackage{geometry}            
\usepackage{enumerate}           
\usepackage{hyperref}            
\usepackage{cite}                
\usepackage{comment}
\usepackage{xcolor}
\usepackage[normalem]{ulem}
\usepackage{cleveref}
\numberwithin{equation}{section}

\newtheorem{theorem}{Theorem}[section]     
\newtheorem{lemma}[theorem]{Lemma}         
\newtheorem{proposition}[theorem]{Proposition} 

\theoremstyle{definition}                  
\newtheorem{definition}[theorem]{Definition}

\newtheorem{remark}[theorem]{Remark}

\newcommand{\Prob}{\mathbb{P}}
\newcommand{\abs}[1]{\left|#1\right|}
\newcommand{\E}{\mathbb{E}}
\newcommand{\norm}[1]{\left\|#1\right\|}
\newcommand{\inner}[2]{\left\langle #1, #2 \right\rangle}
\newcommand{\cH}{\mathcal{H}}
\title{An Inverse Random Source Problem for the Moore-Gibson-Thompson Equation Driven by Fractional Brownian Motion}
\author{Lingxi Gao, Yeqiong Ye and Ting Zhou \\ \small Department of Mathematics \\ \small Zhejiang University}
\date{}

\begin{document}

\maketitle

\begin{abstract}
    \noindent
    In this paper, we consider an inverse random source problem for the stochastic Moore-Gibson-Thompson equation driven by fractional Brownian motion with Hurst index $H \in (0, 1)$ of the form $f_1(x)g_1(t)\dot{B}^H(t)+f_2(x)g_2(t)$. Given the random source, existence and uniqueness of mild solutions are verified. For the inverse problem, the uniqueness of recovering the strength $f_i(x)$ if the time functions $g_i$ are known and $g_i(t)$ if the spatial functions $f_i$ are known when $H \in(0,1)$ from the boundary flux on a special nonempty open subset is proved.
\end{abstract}

\section{Introduction}
    \subsection{Statement of the Problem}
The Moore-Gibson-Thompson (MGT) equation is a third-order-in-time partial differential equation that arises in the mathematical modeling of acoustic wave propagation in viscous thermally relaxing fluids. It was originally derived from the works of Moore, Gibson, and Thompson\cite{doi:10.2514/8.8418, thompson1972compressible} as a refinement of classical acoustic models, and can be formally written as
\begin{equation*}
    \partial_{ttt} u(x, t)+\alpha \partial_{tt} u(x, t)-b\Delta \partial_t u(x, t) -c^2 \Delta u(x, t)=F(x, t) .
\end{equation*}
It has attracted considerable attention in both the mathematical and engineering communities due to its rich mathematical structure and wide applicability in fields such as medical ultrasound, nondestructive testing, and sonar technology.

Despite the growing literature on deterministic MGT equations, the stochastic counterpart — where the source term is driven by random noise — has received comparatively little attention. Inverse random source problems have been investigated for several stochastic evolution equations, including stochastic wave equations, stochastic time-fractional diffusion and diffusion-wave equations, and stochastic biharmonic wave equations. These works usually recover statistical information of the source from expectation, covariance, final-time observations, boundary measurements, or far-field data. \cite{XiaoliFeng2022} determined the statistical properties of the source for stochastic wave equation from the expectation and covariance of the final-time data. \cite{Lassas_2023} proved the uniqueness of the random source under the boundary measurements for stochastic time-fractional diffusion wave equations. \cite{doi:10.1137/21M1429138} considered the stochastic biharmonic wave equation with a microlocally isotropic Gaussian random source and showed that the strength of the random source, appearing in the principal symbol of the covariance operator, can be uniquely determined by a single realization of the magnitude of the wave field averaged over a frequency band, with probability one. And recently, \cite{https://doi.org/10.1002/mma.70574} proved uniqueness of recovering two initial values and a random source simultaneously by final observations via phase retrieval for stochastic damped wave equations.

Motivated by these developments, this paper studies an inverse random source problem for the stochastic Moore-Gibson-Thompson equation  driven by Gaussian random noise, formulated as follows: 
\begin{flalign}
    \begin{cases} 
        \partial_{ttt} u(x, t)+\alpha \partial_{tt} u(x, t)-b\Delta \partial_t u(x, t) -c^2 \Delta u(x, t)=F(x, t) & (x, t) \in D \times [0, T],\\
        u(x, t)=0 & (x, t) \in \partial D \times [0, T],\\
        u(x, 0)=\partial_t u(x, 0)=\partial_{tt} u(x, 0)=0 & x \in D.  \label{MGT}
    \end{cases}
\end{flalign}
In contrast to the above works, the present model is a third-order-in-time acoustic equation with memory-type damping structure. 
Here, the domain \(D\) is assumed to be bounded and open with $C^2$ boundary. 
In many applications of acoustic wave propagation, the effective acoustic source cannot be regarded as perfectly deterministic. Variations in the operating conditions of the source, environmental disturbances, imperfect coupling between the source and the surrounding medium, and other unresolved effects may introduce random fluctuations into the excitation. Such effects are particularly relevant in thermally relaxing and viscous media, where the MGT equation provides a more appropriate description. To account simultaneously for the nominal excitation and its random fluctuations, we consider a source of the form
\begin{equation*}
    F(x, t)=f_1(x)g_1(t)\dot{B}^H(t)+f_2(x)g_2(t),
\end{equation*}
where the second term represents a deterministic source component, while the first describes a randomly fluctuating component. The use of fractional Brownian motion $B^H$ allows temporal correlations in the random excitation to be incorporated into the model, with the classical white-noise case recovered when (H=1/2).

The corresponding inverse problem can be interpreted as identifying an unknown acoustic source from measurements collected on an accessible portion of the boundary. 

In our setting, we assume that the parameters \( \alpha,\ b,\ c^2\) are constant and strictly positive, and the parameter $\gamma := \alpha-\frac{c^2}{b}>0$ plays an important role in the well-posedness of the direct problem. Throughout this paper, we assume $\gamma>0$, under which the associated solution semigroup is uniformly exponentially stable.\cite{article} 

In practice, the boundary flux $\partial_\nu u(x, t, \omega)$ recorded in a single experiment contains contributions from both the deterministic source components $f_i, g_i$ and the stochastic noise. Since these two contributions are entangled in each individual realization, it is in general impossible to separate them from a single measurement. Hence in stochastic inverse source problems, one possible approach is to use statistical quantities of the boundary measurement, such as its expectation and variance, as the observation data. The expectation depends linearly on the deterministic source component and therefore allows the usual difference argument. By contrast, the variance depends quadratically on the amplitude of the stochastic source. Consequently, if two stochastic sources produce the same variance of the boundary flux, the difference of the two variances cannot in general be represented as the variance generated by the difference of the sources. In particular,
\begin{equation*}
    \operatorname{Var}[\partial_{\nu}u_F]=\operatorname{Var}[\partial_{\nu}u_{\tilde{F}}],
\end{equation*}
does not imply 
\begin{equation*}
    \operatorname{Var}[\partial_{\nu}u_{F-\tilde{F}}]=0.
\end{equation*}
Therefore, an argument based only on the variance typically yields a zero-observation uniqueness result, namely that vanishing statistical data imply the vanishing of the corresponding source, but it does not directly establish uniqueness between two arbitrary nonzero sources. To obtain genuine uniqueness, we take the random boundary flux itself as the observation data. More precisely, we consider the observed data as
\begin{equation*}
    \partial_{\nu}u|_{\Gamma\times (0, T)} \in L^2\left(\Omega; L^2\left(\Gamma\times (0, T)\right)\right).
\end{equation*}
When two solutions are compared, they are assumed to be defined on the same probability space and driven by the same fractional Brownian motion. Since the forward problem is linear with respect to the source terms, equality of the random boundary measurements,
\begin{equation*}
    \partial_{\nu}u|_{\Gamma\times (0, T)}=\partial_{\nu}\tilde{u}|_{\Gamma\times (0, T)} \text{ in } L^2\left(\Omega; L^2\left(\Gamma\times (0, T)\right)\right)
\end{equation*}
implies that the difference $u-\tilde{u}$
 has vanishing boundary flux and is generated by the differences of the corresponding source profiles. Hence, a zero-observation injectivity result can be applied to the difference equation to establish uniqueness for two arbitrary admissible sources. Expectations and second moments are still used in the proof as analytical tools for separating the deterministic and stochastic source components, but they are not regarded as the measurement data.

\subsection{Main Results} 
We first establish the well-posedness and regularity of the mild solution to lay the groundwork for the inverse analysis.
\begin{theorem} \label{thm_existence}
         Assume $f_1 \in  H^1_0(D)$, $f_2 \in L^2(D)$ and $g_2 \in L^2(0, T)$. Additionally assume that $g_1 \in C^{0, \rho}([0, T])$ with $\rho > \frac{1}{2}-H$ for $H \in \left(0, \frac{1}{2}\right)$, and $g_1 \in L^2(0, T)$ for $H \in \left[\frac{1}{2}, 1\right)$, then the stochastic MGT equation \eqref{evolution_equation} admits a unique mild solution given by
    \begin{equation}\label{eq:mild-solution}
        X(t) = \int_0^t e^{A(t-s)} \Phi(s) dB^H(s)+\int_0^t e^{A(t-s)} \tilde{\Phi}(s) ds.
    \end{equation}
    Moreover, $X(t)\in L^2(\Omega;\mathcal{H})$ for all $t\in [0,T]$.
    \end{theorem}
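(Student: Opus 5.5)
The plan is to rewrite \eqref{MGT} as a first-order abstract evolution equation $dX = AX\,dt + \Phi(t)\,dB^H(t) + \tilde\Phi(t)\,dt$ on the energy space $\mathcal{H} = H_0^1(D)\times H_0^1(D)\times L^2(D)$. Here $X=(u,u_t,u_{tt})^{\top}$, $\Phi(t) = (0,0,f_1 g_1(t))^{\top}$ and $\tilde\Phi(t)=(0,0,f_2 g_2(t))^{\top}$. The operator $A$ is the usual MGT generator, with domain chosen so that $b u_t + c^2 u \in H^2\cap H_0^1$. Under $\gamma>0$ I will take as given (from the cited work \cite{article}) that $A$ generates a $C_0$-semigroup $e^{At}$ that is uniformly exponentially stable; in particular $\|e^{At}\|_{\mathcal L(\mathcal H)}\le Me^{-\omega t}$. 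Uniqueness is then immediate. Two mild solutions with the same source satisfy the same formula \eqref{eq:mild-solution}, and the difference of two weak solutions solves the homogeneous equation with zero data, which forces it to vanish by the semigroup property. So the real content is that both integrals in \eqref{eq:mild-solution} are well defined and lie in $L^2(\Omega;\mathcal H)$.

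The deterministic term is routine. Since $f_2\in L^2(D)$ and $g_2\in L^2(0,T)$, we have $\|e^{A(t-s)}\tilde\Phi(s)\|_{\mathcal H}\le M\|f_2\|_{L^2}|g_2(s)|$. This is in $L^1(0,t)$, so Bochner integrability gives a deterministic element of $\mathcal H$.

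For the stochastic term, the integrand is deterministic: $s\mapsto e^{A(t-s)}\Phi(s)=g_1(s)\,\psi(t-s)$ with $\psi(r):=e^{Ar}(0,0,f_1)^{\top}\in\mathcal H$. The Wiener integral with respect to scalar fBm is therefore a centered Gaussian $\mathcal H$-valued variable, and its second moment equals $\|1_{[0,t]}g_1\psi(t-\cdot)\|^2_{\mathfrak H_H\otimes\mathcal H}$. I will split into two cases by the Hurst index.

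\emph{Case $H\ge 1/2$.} For $H=1/2$ this is the Itô isometry, which needs only $g_1\in L^2$ and boundedness of $\psi$. For $H>1/2$ I will use the bound $\E\|\int_0^t G\,dB^H\|^2 \le C_H\|G\|^2_{L^{1/H}(0,t;\mathcal H)}\le C\|G\|^2_{L^2(0,t;\mathcal H)}$, which follows from the Hardy–Littlewood-type inequality for the kernel $H(2H-1)|u-v|^{2H-2}$. Since $\|G(s)\|\le M\|f_1\|\,|g_1(s)|$, this again only needs $g_1\in L^2$.

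\emph{Case $H<1/2$.} This is the main obstacle: integrands must have fractional Hölder regularity, and $e^{A(t-s)}$ is not uniformly Hölder in $s$ on all of $\mathcal H$. I would use the characterization of $\mathfrak H_H$ through $K_H^*$. This amounts to checking the finiteness of
\[\int_0^t \frac{\|G(s)\|^2}{s^{1-2H}}\,ds+\int_0^t\!\!\left(\int_s^t \frac{\|G(r)-G(s)\|}{(r-s)^{3/2-H}}\,dr\right)^2 ds.\]
Write $G(r)-G(s)=(g_1(r)-g_1(s))\psi(t-r)+g_1(s)(\psi(t-r)-\psi(t-s))$. The first piece is bounded by $C(r-s)^{\rho}$, which is integrable against $(r-s)^{-(3/2-H)}$ exactly because $\rho>1/2-H$. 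For the second piece, the hypothesis $f_1\in H^1_0(D)$ enters: I would show that $(0,0,f_1)^{\top}$ lies in an interpolation space $D((-A)^{\theta})$ with some $\theta>1/2-H$. The natural candidate is $\theta=1/2$, since the third component gains one derivative; I would try to verify this via the identification of $D(A)$ and interpolation. Then the estimate $\|(e^{Ah}-I)e^{A\tau}x\|\le C h^{\theta}\|x\|_{D((-A)^\theta)}$ gives Hölder continuity of $\psi$ of order $\theta$. The inner integral is then finite, and the weighted first term is finite since $g_1$ is bounded. If the fractional-power route proves awkward for the non-self-adjoint $A$, the fallback is to use directly that $(0,0,f_1)^{\top}$ is in the real interpolation space $(\mathcal H,D(A))_{1/2,\infty}$, which already yields the needed $C^{1/2}$ bound on $\psi$. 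Combining both terms gives $X(t)\in L^2(\Omega;\mathcal H)$ for every $t\in[0,T]$.
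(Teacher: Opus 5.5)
Your proposal follows essentially the paper's route. It uses the same case split in $H$: It\^o's isometry for $H=\frac12$, and the $L^{1/H}$-type bound (the paper's Lemma~\ref{le_inequality} with $p=2>1/H$) for $H>\frac12$. For $H<\frac12$ it uses the $K_H^*$ representation, splitting the increment of $s\mapsto e^{A(t-s)}\Phi(s)$ into a H\"older increment of $g_1$ and an increment of the orbit $\psi$. Two adjustments are worth making.

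First, the state space. In the paper, \eqref{evolution_equation} is posed on $\mathcal H=\dot H^2(D)\times\dot H^1(D)\times L^2(D)$, not on $H_0^1(D)\times H_0^1(D)\times L^2(D)$. As written, you therefore prove $X(t)\in L^2(\Omega;\cdot)$ in a weaker norm than the statement asserts: the first component lands only in $H_0^1$ rather than $\dot H^2$. The argument transfers verbatim to the paper's space, for three reasons:
\begin{enumerate}[(i)]
\item at that energy level $A$ still generates an exponentially stable semigroup when $\gamma>0$ (the Marchand--McDevitt--Triggiani result the paper invokes);
\item $\|(0,0,f)^{\top}\|_{\mathcal H}=\|f\|_{L^2(D)}$;
\item $\mathcal D(A)=\mathcal D(-\Delta)\times\mathcal D(-\Delta)\times\mathcal D((-\Delta)^{1/2})$.
\end{enumerate}

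Second, the fractional-power or interpolation step is unnecessary, and your guess $\theta=\frac12$ undersells what holds. In either space, $(0,0,f_1)^{\top}\in\mathcal D(A)$ itself whenever $f_1\in H_0^1(D)$, with $A(0,0,f_1)^{\top}=(0,f_1,-\alpha f_1)^{\top}$. Hence for $s<r$,
\[
\psi(t-r)-\psi(t-s)=-e^{A(t-r)}\int_0^{r-s}e^{A\sigma}A(0,0,f_1)^{\top}\,d\sigma ,
\]
which is Lipschitz in $r-s$ with constant $C\|f_1\|_{H^1(D)}$. This is exactly how the paper bounds its term $I_{21}$, and it avoids fractional powers of the non-self-adjoint $A$ altogether.
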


Based on the well-posedness and the hidden regularity of the boundary flux, we establish the uniqueness of the inverse random source problem from the boundary measurement. The following two theorems state the unique recovery of the spatial profiles $f_i(x)$ and the temporal components $g_i(t)$, respectively.
\begin{theorem}\label{thm_unique-f}
    Under the assumptions of Theorem \ref{thm_existence}, assume further that $g_i \in C^1([0, T])$ satisfying $g_i(0) \ne 0$ for $i=1, \ 2$. If $\Gamma$ and $T$ satisfy the geometric condition \eqref{geometric} and time conditions
    \begin{equation}
        T > \frac{1}{\sqrt{b}}\sup_{x \in D} |x - x_0|. \label{time}
    \end{equation}
    Then, 
    \begin{flalign*}
        \partial_{\nu}u(x, t)=0 \in L^2 \left(\Omega;L^2\left(\Gamma \times (0, T)\right)\right)
    \end{flalign*}
    implies that
    \begin{flalign*}
        \mathbb{E}\left[ \partial_{\nu} u(x, t) \right]=\operatorname{Var} \left[ \partial_{\nu} u(x, t) \right]=0 \text{ in }L^2\left(\Gamma \times (0, T)\right).
    \end{flalign*}
    Therefore $f_1(x)=f_2(x)=0$ a.e. in $D$.
\end{theorem}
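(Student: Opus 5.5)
Since the boundary flux vanishes in $L^2(\Omega;L^2(\Gamma\times(0,T)))$, its expectation and variance vanish as well. Our plan is to exploit the linearity of the mild solution \eqref{eq:mild-solution} and split $u=u_1+u_2$. Here $u_1$ is the stochastic part, driven by $f_1g_1\dot B^H$, and $u_2$ is the deterministic part, driven by $f_2g_2$. Because the stochastic integral against $B^H$ of a deterministic integrand has zero mean, we get $\mathbb{E}[\partial_\nu u]=\partial_\nu u_2$. Because $u_2$ is deterministic, $\operatorname{Var}[\partial_\nu u]=\mathbb{E}|\partial_\nu u_1|^2$. Hence both statistics vanish on $\Gamma\times(0,T)$, which gives the first assertion.

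For $f_2$ we use $\partial_\nu u_2=0$ on $\Gamma\times(0,T)$, where $u_2$ solves the deterministic MGT problem \eqref{MGT} with source $f_2(x)g_2(t)$. The first step is a Duhamel reduction. Let $v$ solve the homogeneous MGT equation with $v(0)=\partial_t v(0)=0$ and $\partial_{tt}v(0)=f_2$. Then
\begin{equation*}
u_2(x,t)=\int_0^t g_2(t-s)\,v(x,s)\,ds .
\end{equation*}
Applying $\partial_\nu$ gives a Volterra equation of the first kind. Differentiating it in $t$ and using $g_2\in C^1$ with $g_2(0)\neq0$ turns it into a Volterra equation of the second kind, $g_2(0)\partial_\nu v+\int_0^t g_2'(t-s)\partial_\nu v(s)\,ds=0$. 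Gronwall's inequality (Titchmarsh-type uniqueness) then yields $\partial_\nu v=0$ on $\Gamma\times(0,T)$. This step needs the hidden regularity of the flux so that it lives in $L^2$.

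The second step is the key one, and we expect it to be the main obstacle: a boundary observability or unique continuation estimate for the homogeneous MGT equation. We would write $w=\partial_t v+\frac{c^2}{b}v$. Then $w$ satisfies the damped wave equation $\partial_{tt}w+\gamma\partial_t w-b\Delta w=$ (lower order term in $v$), with $w(0)=0$ and $\partial_t w(0)=f_2$. The plan is to apply a multiplier (Carleman or $(x-x_0)\cdot\nabla w$) observability estimate under the geometric condition \eqref{geometric}. The wave speed here is $\sqrt b$, which is why the time condition \eqref{time} appears. The coupling $\partial_t v=w-\frac{c^2}{b}v$ is an ODE in time, so it is treated as a compact perturbation, via a compactness–uniqueness argument, or absorbed using $\gamma>0$ and Gronwall. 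The resulting estimate $\|f_2\|\lesssim\|\partial_\nu v\|_{L^2(\Gamma\times(0,T))}$ gives $f_2=0$.

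For $f_1$ we work from $\mathbb{E}|\partial_\nu u_1|^2=0$, which makes $\partial_\nu u_1=0$ almost surely. We write $u_1=\int_0^t S(t-s)f_1\,g_1(s)\,dB^H(s)$, where $S(t)f_1=v(t)$ is the solution above with $f_1$ as the datum, and we test against $\phi\in L^2(\Gamma)$. The second moment is given by the fBm isometry, so $\partial_\nu u_1=0$ forces $\langle\partial_\nu S(t-\cdot)f_1,\phi\rangle g_1(\cdot)$ to vanish in the reproducing kernel space $\mathcal H$ of $B^H$, for each $t$. Since $\mathcal H$ has trivial kernel on these continuous functions, for every $H$ (via $K_H^*$ injectivity), we get $\partial_\nu v(t-s)g_1(s)=0$ for all $s<t$. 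Continuity of $g_1$ and $g_1(0)\neq0$ then give $\partial_\nu v(\tau)=0$ on $\Gamma$ for $\tau$ near $t$, and varying $t$ covers all of $(0,T)$. Alternatively, an Itô-type isometry identity for $H<1/2$ leads to the same conclusion. The observability estimate from the second step then gives $f_1=0$.
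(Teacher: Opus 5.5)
Your argument follows the paper everywhere except the unique continuation step.

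\begin{itemize}
\item The expectation gives a first-kind Volterra equation for $\langle\partial_\nu v_2,\eta\rangle_{L^2(\Gamma)}$, which you close by differentiating and applying Gronwall with $g_2(0)\neq0$.
\item The variance, through injectivity of the fBm covariance, forces $\langle\partial_\nu v_1(t-s),\eta\rangle_{L^2(\Gamma)}g_1(s)=0$. Your reproducing-kernel/$K_H^*$ formulation compresses the paper's case analysis (factorisation of $|r-s|^{2H-2}$ with Riemann--Liouville inversion for $H>\frac12$, inversion of the weighted fractional derivative for $H<\frac12$).
\item Localising with $g_1\neq0$ near $s=0$ is a more direct substitute for the paper's ``integrate and rerun Gronwall''.
\end{itemize}
All of this is fine. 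The gap is the step you flag yourself: showing that $\partial_\nu v=0$ on $\Gamma\times(0,T)$, for the MGT solution with data $(0,0,f)$, forces $f=0$. This is the analytic core of the theorem, and the mechanisms you name do not establish it. Your reduction to $w=v_t+\frac{c^2}{b}v$, with $w_{tt}+\gamma w_t-b\Delta w=\gamma\frac{c^2}{b}v_t$, $w(0)=0$, $w_t(0)=f$, is correct, but it needs a different closing tool.

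\textbf{Compactness--uniqueness is circular here.} It only reduces observability to proving that the finite-dimensional space of invisible data is trivial, which is exactly the uniqueness you want. The usual proof of that triviality (invariance under the generator, then an eigenvector and elliptic unique continuation) is unavailable for two reasons:
\begin{itemize}
\item The admissible class is not invariant, since $A(0,0,f)^T=(0,f,-\alpha f)^T$; equivalently, $\partial_t w$ no longer vanishes at $t=0$.
\item On the full state space $\mathcal H$ even a weak observability inequality fails. There is a branch of real eigenvalues accumulating at $-\frac{c^2}{b}$, along which the flow is essentially the ODE $v_t=-\frac{c^2}{b}v$ and sends almost nothing to the boundary.
\end{itemize}

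\textbf{``Absorbing with $\gamma>0$ and Gronwall'' also fails.} In a multiplier identity the forcing $\gamma\frac{c^2}{b}\bigl(w-\frac{c^2}{b}\int_0^t e^{-c^2(t-s)/b}w(s)\,ds\bigr)$ is lower order but not small. Gronwall cannot remove it from an observability inequality, and $\gamma>0$ does not help.

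\textbf{What works.} Use a Carleman estimate with large parameter $s$ whose weight $e^{s\varphi}$, with $\varphi=e^{\lambda\psi}$ and $\psi=|x-x_0|^2-\beta t^2$, is decreasing in $t$. Since $\varphi(\cdot,t)\le\varphi(\cdot,\tau)$ for $\tau\le t$, one gets $\int_0^T e^{2s\varphi}\bigl|\int_0^t w\,d\tau\bigr|^2dt\le T^2\int_0^T e^{2s\varphi}|w|^2dt$. So the Volterra term is absorbed into the left-hand side for $s$ large.

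The paper skips your reduction altogether. It rescales $\tau=\sqrt b\,t$ and quotes the Carleman estimate for the MGT operator with $y(0)=y_t(0)=0$ (Lemma \ref{le_Carleman_for_wave}). That estimate bounds a weighted $L^2(D)$ norm of $\tilde v_{\tau\tau}(\cdot,0)=f/b$ by boundary integrals of $\partial_\nu\tilde v$ and $\partial_\nu\tilde v_\tau$ over $\Gamma$. Both vanish because $\partial_\nu v=0$ in $H^1(0,T;L^2(\Gamma))$.

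Relatedly, such estimates control $f$ by $\partial_\nu v$ in $H^1(0,T;L^2(\Gamma))$, since $\partial_\nu w=\partial_\nu v_t+\frac{c^2}{b}\partial_\nu v$. They do not give control by the $L^2(\Gamma\times(0,T))$ norm you wrote.
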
 

\begin{theorem}\label{thm_unique-g}
    Under the assumptions of Theorem \ref{thm_existence}, assume further that $f_i \in H_0^1(D) \cap H^2(D)$, $\partial_\nu f_i(x) \ne 0$ in $L^2(\Gamma)$ for $i=1,2$ and $g_1 \in C^1([0, T])$. Then,
    \begin{flalign*}
        \partial_{\nu} u(x, t)=0 \in L^2 \left(\Omega;L^2\left(\Gamma \times (0, T)\right)\right)
    \end{flalign*}
    implies that $g_1(t) = g_2(t) = 0$ a.e. in $(0,T)$.
\end{theorem}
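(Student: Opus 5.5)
The plan is to split the solution into a mean-zero stochastic part and a deterministic part, and reduce each to a statement about a single known kernel built from $f_i$. By linearity write $u=u^{(1)}+u^{(2)}$. Here $u^{(1)}$ is the component of the mild solution \eqref{eq:mild-solution} generated by $f_1g_1\dot B^H$, and $u^{(2)}$ is the component generated by $f_2g_2$.

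\textbf{Step 1: Duhamel representation.} For $i=1,2$ let $w_i$ solve the homogeneous MGT equation in \eqref{MGT} with $F=0$ and initial data
\begin{equation*}
w_i(0)=\partial_t w_i(0)=0, \qquad \partial_{tt}w_i(0)=f_i .
\end{equation*}
Duhamel's principle (equivalently, reading off the third component of $e^{At}$ in \eqref{eq:mild-solution}) gives
\begin{equation*}
u^{(2)}(x,t)=\int_0^t g_2(s)\,w_2(x,t-s)\,ds, \qquad u^{(1)}(x,t)=\int_0^t g_1(s)\,w_1(x,t-s)\,dB^H(s).
\end{equation*}
Since $f_i\in H^2(D)\cap H^1_0(D)$, the data $(0,0,f_i)$ lie in the domain of the generator. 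The hidden regularity of the boundary flux, used for the preceding theorems, then gives $\partial_\nu w_i\in C([0,T];L^2(\Gamma))$, and in fact $\partial_\nu w_i\in C^2([0,T];L^2(\Gamma))$ with
\begin{equation*}
\partial_\nu w_i(\cdot,0)=\partial_t\partial_\nu w_i(\cdot,0)=0, \qquad \partial_{tt}\partial_\nu w_i(\cdot,0)=\partial_\nu f_i .
\end{equation*}

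Fix $\varphi\in L^2(\Gamma)$ with $\langle \partial_\nu f_i,\varphi\rangle_{L^2(\Gamma)}\neq0$, which is possible because $\partial_\nu f_i\ne0$. Set
\begin{equation*}
k_i(\tau)=\langle \partial_\nu w_i(\cdot,\tau),\varphi\rangle_{L^2(\Gamma)} .
\end{equation*}
By Taylor's formula, $k_i(\tau)=\tfrac{\tau^2}{2}\langle\partial_\nu f_i,\varphi\rangle+o(\tau^2)$. Hence there is $\tau_0>0$ such that $k_i(\tau)\neq0$ for all $\tau\in(0,\tau_0)$.

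\textbf{Step 2: the deterministic part.} The Wiener-type integral against $B^H$ has zero mean, so taking expectations of $\partial_\nu u=0$ gives $\partial_\nu u^{(2)}=0$ on $\Gamma\times(0,T)$. Pairing with $\varphi$ yields
\begin{equation*}
(g_2*k_2)(t)=0 \quad\text{for } t\in(0,T).
\end{equation*}
The kernel $k_2$ does not vanish identically on any interval $(0,\epsilon)$, by Step 1. Titchmarsh's convolution theorem therefore forces $g_2=0$ a.e. on $(0,T)$.

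\textbf{Step 3: the stochastic part.} Consequently $\partial_\nu u^{(1)}=0$, so $\langle\partial_\nu u^{(1)}(\cdot,t),\varphi\rangle=0$ almost surely for a.e. $t$. Hence its second moment vanishes:
\begin{equation*}
\E\Big|\int_0^t h_t(s)\,dB^H(s)\Big|^2=0, \qquad h_t(s):=g_1(s)\,k_1(t-s)\,\mathbf 1_{(0,t)}(s).
\end{equation*}
The left side equals $\|h_t\|^2_{\mathcal H_H}$, the squared norm in the reproducing-kernel (isometry) space of $B^H$, in each of the three regimes:
\begin{itemize}
\item for $H=\tfrac12$, by Itô's isometry, so it is $\|h_t\|_{L^2}^2$;
\item for $H>\tfrac12$, it is the positive-definite form $H(2H-1)\iint h_t(s)h_t(r)|s-r|^{2H-2}\,ds\,dr$;
\item for $H<\tfrac12$, it is given through the fractional-derivative representation, which is legitimate here because $g_1\in C^1$ and $k_1\in C^2$ make $h_t$ Hölder continuous on $[0,t)$.
\end{itemize}
In all cases $\|\cdot\|_{\mathcal H_H}$ is a genuine norm on such functions, so $h_t=0$ a.e. on $(0,t)$. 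Since $k_1(t-s)\ne0$ for $s\in(t-\tau_0,t)$, we get $g_1=0$ on $(t-\tau_0,t)$ for every $t\in(0,T]$. This gives $g_1=0$ a.e. on $(0,T)$.

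\textbf{Main obstacle.} I expect the delicate point to be the regularity claims of Step 1. These are the $C^2$-in-time behaviour of the flux $\partial_\nu w_i$ near $t=0$, obtained from the semigroup on $D(A)$ together with the hidden-regularity estimate, and, for $H<\tfrac12$, the justification that $h_t$ lies in the fBm integrand space with nondegenerate norm. My first attempt would be to differentiate the semigroup once more in time, applying hidden regularity to $\partial_t w_i$, and to invoke the standard injectivity of the map $h\mapsto\int h\,dB^H$ from the fBm isometry space into $L^2(\Omega)$.
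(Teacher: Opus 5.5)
Your argument is correct. For $g_2$ it is the paper's argument: the expectation removes the Wiener integral, and Titchmarsh's theorem applies because the kernel $\langle\partial_\nu v_2(\cdot),\eta\rangle$ is $C^2$, vanishes to first order at $0$, and has second derivative $\langle\partial_\nu f_2,\eta\rangle\neq0$ there. The $H^3$-in-time flux regularity you flag as the main obstacle is exactly what the paper takes from Proposition \ref{thm_hiddenregularity}.

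For $g_1$ your route is shorter. The paper first derives $\langle\partial_\nu v_1(t-s),\eta\rangle g_1(s)=0$ for a.e. $s$ by reusing the regime-by-regime computations from the proof of Theorem \ref{thm_unique-f}. These are positivity of the integrand for $H=\frac12$, factorizing $|r-s|^{2H-2}$ and inverting a Riemann--Liouville integral for $H>\frac12$, and inverting $D_{t-}^{1/2-H}$ for $H<\frac12$. It then integrates this identity into a convolution equation and applies Titchmarsh a second time. You reach the same pointwise identity by invoking nondegeneracy of the isometry norm, and then use it directly. Since $k_1$ is nonzero on the whole punctured neighbourhood $(0,\tau_0)$, $g_1$ vanishes on every window $(t-\tau_0,t)$, so the detour through the convolution and Titchmarsh is unnecessary.

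The paper's version gives a uniform mechanism for both $g_i$ and a hands-on check of nondegeneracy in each regime; yours gives brevity. The one step you assert rather than prove is that $\norm{h_t}_{\mathcal H_0}=0$ forces $h_t=0$ a.e. This is standard and can be done uniformly in $H$ via the spectral identity $\E\bigl|\int_0^T h\,dB^H\bigr|^2=\kappa_H\int_{\mathbb R}|\hat h(\xi)|^2|\xi|^{1-2H}\,d\xi$ with $\kappa_H>0$, valid for your Lipschitz, compactly supported $h_t$, or by the paper's explicit inversions. Finally, the vanishing is obtained for a.e. $t$ rather than every $t$, which is all your covering argument needs.
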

The rest of the paper is organized as follows. Section 2 introduces necessary preliminaries on fractional Brownian motion and SPDEs. Section 3 focuses on the direct problem, proving Theorem \ref{thm_existence} by establishing the well-posedness and the hidden regularity of the boundary flux for the stochastic MGT equation. Section 4 is devoted to the proof of Theorem \ref{thm_unique-f} regarding the unique recovery of the spatial intensity $f_i(x)$ and Theorem \ref{thm_unique-g} concerning the unique determination of the temporal source $g_i(t)$.

\section{Preliminaries}
\noindent
    We give a brief introduction to fractional Brownian motions in Hilbert spaces and Wiener integral with respect to the fractional Brownian motion. The details can be found in \cite{biagini2008stochastic, nualart2006malliavin, mishura2008stochastic, Tindel2003}, and \cite[Appendix]{Feng_2020}.

    Let $(U, \langle \cdot, \cdot \rangle)$ be a separable Hilbert space 
    endowed with the norm $\|\cdot\|$ induced by the inner product 
    $\langle \cdot, \cdot \rangle$. Let \(T >0 \) and $(\Omega, \mathcal{F}, \Prob)$ be a complete probability space, where $\Omega$ is the sample space, $\mathcal{F}$ is a $\sigma$-algebra on $\Omega$ and $\Prob$ is the probability measure on $(\Omega, \mathcal{F})$. In the sequel, the dependence of random variables on the sample $\omega \in \Omega$ will be omitted unless it is necessary to avoid confusion. We fix a time interval $[0, T]$. 
    \begin{definition}
        A one-dimensional centered Gaussian process $\{B^H(t)\}_{t \ge 0}$ is called fractional Brownian motion (fBm) with Hurst index $H \in (0, 1)$ if $B^H(0)=0$ and
        \begin{equation*}
            \operatorname{Cov}\left[ B^H(t),\ B^H(s)\right]=\frac{1}{2}\left(t^{2H}+s^{2H}-|t-s|^{2H}\right):=R_H(t, s).
        \end{equation*}
        In particular, if $H=\frac{1}{2}$, it reduces to the standard one-dimensional Wiener process. 
    \end{definition}
    
        It is worth noting that modern stochastic calculus provides various approaches to define stochastic integrals with respect to fBm for random integrands. These include the Wiener integral with respect to fractional Brownian motion, Skorohod integral based on Malliavin calculus and pathwise Riemann-Stieltjes integrals (see, e.g., \cite{biagini2008stochastic, nualart2006malliavin, mishura2008stochastic}). However, since the integrands arising in our specific inverse problem are strictly deterministic functions, we do not need to invoke these complex frameworks. Instead, it is sufficient and rigorous to understand the stochastic integration with respect to fBm in the sense of Wiener integrals. 
        
        Consider a fBm $\{B^H(t)\}_{t \in [0, T]}$ with Hurst parameter $H \in (0, 1)$. We denote by $\mathcal{E}$ the set of step functions on $[0, T]$. Let $\mathcal{H}_0$ be the Hilbert space defined as the closure of $\mathcal{E}$ with respect to the scalar product
        $$\langle \mathbf{1}_{[0,t]}, \mathbf{1}_{[0,s]} \rangle_{\mathcal{H}_0} = R_H(t, s).$$
        For $\varphi \in \mathcal{E}$ of the form $\varphi=\sum_{i=1}^{n}a_i\mathbf{1}_{\left(t_i, t_{i+1}\right]}(t)$, we define its Wiener integral with respect to the fractional Brownian motion as \cite{fractionalBrownian,Tindel2003}
        \begin{equation}\label{eq:defWiener}
            \int_0^T \varphi(s) dB^H(s) = \sum_{i=1}^n a_i \left( B^H(t_{i+1}) - B^H(t_{i})) \right).
        \end{equation}
    Obviously, the mapping
    \begin{equation*}
        \varphi = \sum_{i=1}^n a_i \mathbf{1}_{(t_i, t_{i+1}]} \rightarrow \int_0^T \varphi(s) dB^H(s) 
    \end{equation*}
    is an isometry between $\mathcal{E}$ and the linear space $\text{span}\{B^H(t), t \in [0,T]\}$ viewed as a subspace of $L^2(\Omega)$ and it can be extended to an isometry between $\mathcal{H}_0$ and the first Wiener chaos of the fractional Brownian motion $\overline{\operatorname{span}}^{L^2(\Omega)}\{B^H(t), t \in [0,T]\}$. The image of an element $\varphi \in \mathcal{H}_0$ under this isometry is called the Wiener integral of $\varphi$ with respect to $B^H$.

        For any $t\in [0,T]$ and deterministic $\psi,\ \varphi \in \mathcal{H}_0$, \\
        \noindent \textbf{case 1: $H=\frac{1}{2}$}
        \begin{flalign}\label{eq:H=1/2}
            &\mathbb{E}\left[ \int_0^t \psi(s) \; dB^H(s) \int_0^t \varphi(s) \; dB^H(s) \right]=\int_0^t \psi(s)\varphi(s) \; ds,
        \end{flalign}
        \noindent \textbf{case 2: $H \in (\frac{1}{2},1)$}
        \begin{flalign}\label{eq:H>1/2}
            &\mathbb{E}\left[ \int_0^t \psi(s) \; dB^H(s) \int_0^t \varphi(s) \; dB^H(s) \right]=H(2H-1)\int_0^t\int_0^t \psi(s)\varphi(r) |s-r|^{2H-2} \; ds\; dr,  
        \end{flalign}
        \noindent \textbf{case 3: $H \in (0,\frac{1}{2})$}, define $K_H(t, s)$ as
        \begin{flalign*}
            K_H(t, s):=c_H\left[\left(\frac{t}{s}\right)^{H-\frac{1}{2}}(t-s)^{H-\frac{1}{2}}-\left(H-\frac{1}{2}\right) s^{\frac{1}{2}-H} \int_{s}^{t} u^{H-\frac{3}{2}}(u-s)^{H-\frac{1}{2}} d u\right],
        \end{flalign*}
        where $c_H=\left(\frac{2H}{(1-2H)\beta\left(1-2H,\ H+1/2\right)}\right)^{\frac{1}{2}}$ and $\beta(\cdot, \cdot)$ denotes the beta function. 
        Define the linear operator $K^*_H(t)$ from the step functions on $(0, t)$ to $L^2(0, t)$ as 
        \begin{flalign}\label{eq:K_H^*}
            \left(K_H^*(t)\varphi\right)(s) :=K_H(t, s)\varphi(s)+\int_s^t \left(\varphi(\tau)-\varphi(s)\right)\frac{\partial}{\partial\tau}K_H(\tau, s)d\tau.
        \end{flalign}
        By the definitions of $K_H$ and $K_H^*(t)$, it can be derived that
        \begin{flalign}\label{eq:H<1/2}
            &\mathbb{E}\left[ \int_0^t \psi(s) \; dB^H(s) \int_0^t \varphi(s) \; dB^H(s) \right]=\int_{0}^{t} \left(K_H^*(t)\psi\right)(s)\cdot \left(K_H^*(t)\varphi\right)(s)  \nonumber\\
            =&\int_{0}^{t} \left\{ c_{H}\left[\left(\frac{t}{s}\right)^{H-\frac{1}{2}}(t-s)^{H-\frac{1}{2}}-\left(H-\frac{1}{2}\right) s^{\frac{1}{2}-H} \int_{s}^{t} u^{H-\frac{3}{2}}(u-s)^{H-\frac{1}{2}} d u\right] \psi(s) \right.\nonumber \\
            &\quad \left. + \int_{s}^{t}(\psi(u)-\psi(s)) c_{H}\left(\frac{u}{s}\right)^{H-\frac{1}{2}}(u-s)^{H-\frac{3}{2}} d u \right\} \nonumber\\
            &\quad \times \left\{ c_{H}\left[\left(\frac{t}{s}\right)^{H-\frac{1}{2}}(t-s)^{H-\frac{1}{2}}-\left(H-\frac{1}{2}\right) s^{\frac{1}{2}-H} \int_{s}^{t} u^{H-\frac{3}{2}}(u-s)^{H-\frac{1}{2}} d u\right] \varphi(s) \right. \nonumber\\
            &\quad \left. + \int_{s}^{t}(\varphi(u)-\varphi(s)) c_{H}\left(\frac{u}{s}\right)^{H-\frac{1}{2}}(u-s)^{H-\frac{3}{2}} d u \right\} d s,        
        \end{flalign}

        There are some useful properties of the integral kernel $K_H(t, s)$ and $\frac{\partial}{\partial t}K_H(t, s)$ (see \cite[Proposition 5.1.3]{nualart2006malliavin} and \cite[Theorem 3.2]{Decreusefond1999}):
        \begin{flalign}
        	R_H(t, s)=\int_{0}^{t \wedge s}K_H(t, u)K_H(s, u)\;du, \label{eq_kernel_1}
        \end{flalign}
        \begin{flalign}
        	\left|K_H(t, s)\right| \le c(H)\left(t-s\right)^{H-\frac{1}{2}}s^{H-\frac{1}{2}}. \label{eq_kernel_2}
        \end{flalign}
        Noticing that 
        \begin{flalign}
        	\frac{\partial}{\partial t}K_H(t, s)=c_H\left(H-\frac{1}{2}\right)\left(\frac{t}{s}\right)^{H-\frac{1}{2}}\left(t-s\right)^{H-\frac{3}{2}}, \label{eq_kernel_3}
        \end{flalign}
        the estimate of $\frac{\partial}{\partial t}K_H(t, s)$ holds (see \cite[Equation 5.46]{nualart2006malliavin})
        \begin{flalign}
        	\left|\frac{\partial}{\partial t}K_H(t, s)\right|\le c_H\left(\frac{1}{2}-H \right)\left(t-s\right)^{H-\frac{3}{2}}.\label{eq_kernel_4}
        \end{flalign}

        If $H<\frac{1}{2}$, $\mathcal{H}_0$ is a class of functions that contains $C^{0, \gamma}\left(\left[0, T\right]\right)$ if $\gamma > \frac{1}{2}-H$. If $H=\frac{1}{2}$, then $\mathcal{H}_0=L^2\left(\left[0, T\right]\right)$, and if $H>\frac{1}{2}$, $\mathcal{H}_0$ contains the space of functions that the value of \eqref{eq:H>1/2} is strictly less than infinity.
        
     Since the trajectories of $B^H(t)$ are not differentiable, the stochastic MGT equation in \eqref{MGT} does not hold pointwise; it should be interpreted as an integral equation. We introduce the strong solution and mild solution of the stochastic equation. Consider the linear stochastic equation with additive noise on \([0, T]\) in the Hilbert space $\mathcal{H}$,
    \begin{flalign}
        \begin{cases}
            dX(t)=AX(t)+\Phi(t)\;dB^H(t)+\tilde{\Phi}(t)\; dt, & t \in [0, T],\\
            X(0)=\xi, \label{evolution_equation1}
        \end{cases}
    \end{flalign}
    where \(A: \mathcal{D}(A) \to \mathcal{H}\) is the infinitesimal generator of a strongly continuous semigroup \(S(t)\), \(\xi\) is a \(\mathcal{F}_0\)-measurable \(\mathcal{H}\)-valued random variable and $\Phi: [0, T] \to \mathcal{H},\ \Phi \in L^2\left([0, T]; \mathcal{H}\right)$.
    \begin{definition}\label{def:mildsol}
        An $\mathcal{H}$-valued predictable stochastic process \(X(t)\), \(t \in [0, T]\) is said to be a mild solution to \eqref{evolution_equation1}, if for arbitrary \( t \in [0, T]\),
        \begin{flalign}
            X(t)=S(t)\xi+\int_{0}^tS(t-s)\Phi(s)\ dB^H(s)+\int_{0}^tS(t-s)\tilde{\Phi}(s)\; ds \quad \text{a.s.}\label{mild_sol}
        \end{flalign}
        where $S(t)$ is the strongly continuous semigroup generated by $A$. 
    \end{definition}
    \begin{definition}
        If $H=\frac{1}{2}$, a \(\mathcal{D}(A)\)-valued predictable stochastic process \(X(t)\), \(t \in [0, T]\) is said to be a strong solution to \eqref{evolution_equation1}, if for arbitrary \( t \in [0, T]\),
        \begin{flalign}
            X(t)=\xi+\int_{0}^t AX(s)ds+ \int_0^t \Phi(s)\ dW(s)+\int_0^t \tilde{\Phi}(s)\;ds \quad \text{a.s.} \label{strong_sol}
        \end{flalign}
    \end{definition}

    For simplicity, let $-\Delta$ denote the Laplace operator on $D$ with homogeneous Dirichlet boundary conditions. It is well-known that the operator $-\Delta$ has eigenvalues $\{\lambda_k\}^\infty_{k=1}$ and eigenfunctions $\{\varphi_k\}^\infty_{k=1}$, where the eigenvalues $\{\lambda_k\}^\infty_{k=1}$ satisfy $0<\lambda_1\le \lambda_2\le\cdots$ with $\lambda_k \to \infty$ as $k \to \infty$, and the eigenfunctions $\{\varphi_k\}^\infty_{k=1}$ form a basis in $L^2(D)$.
    
    Equip the space $\dot{H}^{s}(D):=\mathcal{D}\left((-\Delta)^{s/2}\right)$ with the norm
    \begin{equation}
        \|u\|_{\dot{H}^s(D)}=\left(\sum_{k=1}^\infty \lambda_k^s\langle u(x), \varphi_k(x) \rangle^2\right)^{1/2}.
    \end{equation}
    Clearly, $\|u\|_{\dot{H}^s(D)}$ and $ \|u\|_{H^s(D)}$ are equivalent norms on $\mathcal{D}\left((-\Delta)^{s/2}\right)$ when $s=1, 2$\cite[Lemma 3.1]{pub.1039910508}.

\section{The direct problem}
    \subsection{Well-posedness of stochastic MGT equation}
    Let $X(t) = [u(\cdot,t), u_t(\cdot,t), u_{tt}(\cdot,t)]^T$ and $\mathcal{H}=\dot{H}^2(D) \times \dot{H}^1(D)\times L^2(D)$, then equation \eqref{MGT} turns to
        \begin{equation}
            \begin{cases}
                dX(t) =
                AX(t)dt +\Phi(t) dB^H(t)+\tilde{\Phi}(t)dt,\\
                X(0)=0. \label{evolution_equation}
            \end{cases}
        \end{equation}
        where
        \begin{flalign*}
            A = \begin{bmatrix} 0 & I & 0 \\ 0 & 0 & I \\ c^2 \Delta & b \Delta & -\alpha \end{bmatrix}, \quad \Phi(t)=\begin{bmatrix}
            0\\ 0\\ f_1(\cdot)g_1(t) 
        \end{bmatrix}, \quad \text{and } \tilde{\Phi}(t)=\begin{bmatrix}
            0\\ 0\\ f_2(\cdot)g_2(t)
        \end{bmatrix}.
        \end{flalign*} The domain of the operator $A$ is defined as $\mathcal{D}(A) = \mathcal{D}(-\Delta) \times \mathcal{D}(-\Delta) \times \mathcal{D}(-\Delta)^{\frac{1}{2}}$. We can prove that $A$ generates a strongly continuous semigroup $e^{At}$ on $\mathcal{H}$, thus $A$ is a closed operator on $\mathcal{H}$\cite[Theorem 3.1]{pazy2012semigroups}, with an exponential decay rate $\omega$ when $\gamma >0$\cite[Theorem 2.2, 2.3]{articleExponentialDecay}, and
        \begin{equation}\label{eq:exponential decay}
             \|e^{At}\|_{\mathcal{L}(\mathcal{H})} \leq \mu e^{-\omega t}, \quad t \geq 0,
        \end{equation}
        where $\mu \geq 1$ and $\omega > 0$\cite[Theorem 3.1]{https://doi.org/10.1002/mma.1576}.

    To estimate the second moment of the stochastic convolution for the case $H > \frac{1}{2}$, we introduce the following inequality:
    \begin{lemma}\cite[Lemma 2.1]{fractionalBrownian}\label{le_inequality}
        If $p > 1/H$, then for any $\varphi \in L^p(0, T; \mathbb{R})$, the following inequality is satisfied:
	    \begin{equation}
            \int_0^T \int_0^T \varphi(u) \varphi(v) \phi(u-v) \, du dv \le C_T \|\varphi\|_{L^p(0, T; \mathbb{R})}^2
	    \end{equation}
	    for some constant $C_T > 0$ that only depends on $T$, where $\phi(u) = H(2H-1)|u|^{2H-2}$.
    \end{lemma}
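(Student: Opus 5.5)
The plan is to treat the double integral as a convolution pairing and apply Young's convolution inequality on a bounded interval. Throughout, $H\in(\frac12,1)$, the only regime in which the lemma is used. Then $H(2H-1)>0$, and $\phi(u)=H(2H-1)|u|^{2H-2}$ is a nonnegative, locally integrable kernel. Since $\phi\ge 0$, it suffices to bound the integral with $\varphi$ replaced by $|\varphi|$. So I will assume $\varphi\ge 0$ and extend it by zero outside $(0,T)$. For $u,v\in(0,T)$ we have $|u-v|<T$, so only the truncated kernel $\phi_T:=\phi\,\mathbf{1}_{(-T,T)}$ enters, and
\begin{equation*}
\int_0^T\int_0^T \varphi(u)\varphi(v)\phi(u-v)\,du\,dv=\int_{\mathbb{R}}\varphi(u)\,(\phi_T*\varphi)(u)\,du .
\end{equation*}

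The first step is Hölder's inequality with exponents $p$ and $p'=p/(p-1)$, which gives the bound $\|\varphi\|_{L^p}\,\|\phi_T*\varphi\|_{L^{p'}}$. The second step is Young's inequality $\|\phi_T*\varphi\|_{L^{p'}}\le \|\phi_T\|_{L^r}\|\varphi\|_{L^p}$. Its exponent relation is $\frac1p+\frac1r=1+\frac1{p'}$, that is,
\begin{equation*}
\frac1r=2-\frac2p .
\end{equation*}
Here $p>1/H>1$ makes $p'$ finite. It also gives $\frac1r=2-\frac2p>2-2H>0$, so $r\in[1,\infty)$ is admissible whenever $p\le 2$. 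If $p>2$, I will first use $\|\varphi\|_{L^{2}(0,T)}\le T^{\frac12-\frac1p}\|\varphi\|_{L^p(0,T)}$ and reduce to the case $p=2$; this is allowed because $2>1/H$.

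The key step is checking that $\phi_T\in L^r(\mathbb{R})$. We have
\begin{equation*}
\|\phi_T\|_{L^r}^r=2\big(H(2H-1)\big)^r\int_0^T u^{(2H-2)r}\,du,
\end{equation*}
which is finite exactly when $(2-2H)r<1$, i.e. $\frac1r>2-2H$. By the exponent relation this is equivalent to $2-\frac2p>2-2H$, i.e. $p>1/H$, which is precisely the hypothesis. The value is then an explicit power of $T$, namely $C_T=\|\phi_T\|_{L^r}$, depending only on $T$ (and $H$, $p$). Combining the two steps gives the claimed bound $C_T\|\varphi\|^2_{L^p(0,T)}$, with the extra factor $T^{1-2/p}$ absorbed into $C_T$ in the case $p>2$.

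I expect the only delicate point to be the exponent bookkeeping: showing that the integrability threshold of the singular kernel coincides with the condition $p>1/H$, and handling large $p$ by the reduction above. As an alternative for the sharper endpoint, one could invoke the Hardy–Littlewood–Sobolev inequality with $p=1/H$, but the Young-inequality route is enough for the strict inequality $p>1/H$ that is stated.
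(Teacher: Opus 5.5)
Your proof is correct. The reduction to $\varphi\ge 0$ and the H\"older--Young pairing with $1/r=2-2/p$ both work, the integrability condition $(2-2H)r<1$ for the truncated kernel is exactly $p>1/H$, and the reduction of $p>2$ to $p=2$ (legitimate since $2>1/H$ for $H>\tfrac12$) handles the range where $r$ would fall below $1$.

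The paper gives no proof of its own; it only quotes \cite[Lemma 2.1]{fractionalBrownian}, and your H\"older--Young argument is a standard self-contained proof of that bound. Your restriction to $H\in(\tfrac12,1)$ is appropriate: for $H=\tfrac12$ the kernel $\phi$ vanishes, and for $H<\tfrac12$ it is neither nonnegative nor locally integrable.
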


We now turn to the proof of Theorem \ref{thm_existence}.
\begin{proof}
    Define the stochastic convolution $W_A(t)$ as follows:
    \begin{equation}\label{eq:W_A}
        W_A(t) := \int_0^t e^{A(t-s)} \Phi(s) \, dB^H(s).
    \end{equation}
    To establish the well-posedness of the system, it is crucial to show that the second moment of $W_A(t)$ is finite. Below, we discuss the cases $H=\frac{1}{2},H >\frac{1}{2},H < \frac{1}{2}$ separately, since the covariance operator of $B^H$ takes different forms in these three regimes.
    In the following proof, $C$ denotes a general constant whose value will change from line to line.
    
    \noindent \textbf{Case 1: $H = \frac{1}{2}$.} By \eqref{eq:exponential decay}, we have
    \begin{equation*}
        \norm{e^{A(t-s)}\Phi(s)}_{\cH}^2 \leq \mu^2 e^{-2\omega(t-s)}\norm{\Phi(s)}_{\cH}^2 \leq C\abs{g_1(s)}^2 \norm{f_1}_{L^2(D)}^2 .
    \end{equation*}
    Then it follows from It{\^o}'s isometry \eqref{eq:H=1/2} that 
        \begin{flalign*}
        \E\big[\abs{W_A(t)}^2\big] &= \int_0^t \|e^{A(t-s)} \Phi(s)\|_\mathcal{H}^2 ds  \\
        &\leq  \int_0^t \mu^2 e^{-2\omega(t-s)} \|\Phi(s)\|_\mathcal{H}^2 ds  \leq C\|f_1\|_{L^2(D)}^2 \|g_1\|_{L^2(0,T)}^2<\infty.
        \end{flalign*}
        
    \noindent \textbf{Case 2: $H > \frac{1}{2}$.} By applying \eqref{eq:H>1/2}, we obtain
        \begin{flalign*}
        \E\big[\abs{W_A(t)}^2\big] &= \int_0^t \int_0^t \inner{e^{A(t-s)}\Phi(s)}{e^{A(t-r)}\Phi(r)}_{\cH} \phi(r-s) \, ds \, dr \\
        &\leq C \int_0^t \int_0^t \norm{e^{A(t-s)}\Phi(s)}_{\cH} \norm{e^{A(t-r)}\Phi(r)}_{\cH} \phi(r-s) \, ds \, dr \\
        &\leq C \norm{f_1}^2_{L^2(D)}\int_0^t \int_0^t \abs{g_1(s)}  \abs{g_1(r)} \phi(r-s) \, ds \, dr.
        \end{flalign*}
    Since $g_1 \in L^2(0,T)$ for $H > \frac{1}{2}$, by Lemma \ref{le_inequality} we readily obtain that
    \begin{equation}
        \E\big[\abs{W_A(t)}^2\big] \leq C \norm{f_1}_{L^2(D)}^2 \norm{g_1}_{L^2(0,T)}^2 < \infty.
    \end{equation}

    \noindent \textbf{Case 3: $H < \frac{1}{2}$.} By \eqref{eq:H<1/2} and the expression of $K_H^*$ given in \eqref{eq:K_H^*}, we have the following estimate
    \begin{align}\label{eq:es-I1+I2}
        \E\big[\abs{W_A(t)}^2\big] &= \int_0^t \norm{K_H^*(t) \left(e^{A(t-\cdot)}\Phi(\cdot)\right)(s)}_{\cH}^2 \, ds \\
        &\leq C  \int_0^t \norm{K_H(t,s) e^{A(t-s)}\Phi(s)}_{\cH}^2 \, ds \\
        &\quad + C\int_0^t \left( \int_s^t \norm{e^{A(t-\tau)}\Phi(\tau) - e^{A(t-s)}\Phi(s)}_{\cH} \abs{\frac{\partial}{\partial \tau} K_H(\tau,s)} \, d\tau \right)^2 \, ds \\
        &=: I_1 + I_2.
    \end{align}
    Using \eqref{eq_kernel_1}, the first term $I_1$ can be estimated as
    \begin{align}\label{eq:es-I1}
        I_1 \leq C \norm{g_1}_{L^\infty(0,T)}^2 \norm{f_1}_{L^2(D)}^2 \int_0^t (K_H(t,s))^2 \, ds=C t^{2H} \norm{g_1}_{L^\infty(0,T)}^2 \norm{f_1}_{L^2(D)}^2.
    \end{align}
    To estimate $I_2$, we further decompose it into two parts:
    \begin{align*}
        I_2 &\leq C \int_0^t \left( \int_s^t \norm{\left(e^{A(t-\tau)} - e^{A(t-s)}\right)\Phi(\tau)}_{\cH} \abs{\frac{\partial}{\partial \tau} K_H(\tau,s)} \, d\tau \right)^2 ds \\
        &\quad + C \int_0^t \left( \int_s^t \norm{e^{A(t-s)}\big(\Phi(\tau) - \Phi(s)\big)}_{\cH} \abs{\frac{\partial}{\partial \tau} K_H(\tau,s)} \, d\tau \right)^2 ds \\
        &=: I_{21} + I_{22}.
    \end{align*}
    For $I_{21}$, since $\Phi(\tau) \in \mathcal{D}(A)$, it follows from standard semigroup theory (see \cite[Theorem 2.4(d)]{pazy2012semigroups}) that
    \begin{equation*}
        \left(e^{A(t-\tau)} - e^{A(t-s)}\right)\Phi(\tau) = g_1(\tau) \int_s^\tau -e^{A(t-r)} A \begin{bmatrix} 0 \\ 0 \\ f_1 \end{bmatrix} dr.
    \end{equation*}
    Using this relation and the fractional kernel derivative bound \eqref{eq_kernel_4}, $I_{21}$ can be bounded by
    \begin{align}\label{eq:es-I21}
        I_{21} &\leq C \int_0^t \left( \int_s^t \abs{g_1(\tau)} \int_s^\tau \norm{e^{A(t-r)} A \begin{bmatrix} 0 \\ 0 \\ f_1 \end{bmatrix}}_{\cH} dr \, \abs{\frac{\partial}{\partial \tau} K_H(\tau,s)} d\tau \right)^2 ds \nonumber\\
        &\leq C \int_0^t \left( \int_s^t \norm{f_1}_{H_0^1(D)} \abs{g_1(\tau)} \abs{\frac{\partial}{\partial \tau} K_H(\tau,s)} (\tau-s) \, d\tau \right)^2 ds \nonumber\\
        &\leq C \norm{f_1}_{H_0^1(D)}^2 \norm{g_1}_{L^\infty(0,T)}^2 \int_0^t \left( \int_s^t (\tau-s)^{H-\frac{1}{2}} \, d\tau \right)^2 ds \nonumber\\
        &\leq C t^{2H+2} \norm{f_1}_{H_0^1(D)}^2 \norm{g_1}^2_{L^\infty(0,T)} .
    \end{align}
    For $I_{22}$, since $g_1 \in C^{0,\rho}([0,T])$ with $\rho > \frac{1}{2} - H$, we obtain
    \begin{equation*}
        \norm{\Phi(\tau) - \Phi(s)}_{\cH} \leq \abs{\tau-s}^\rho \norm{f_1}_{L^2(D)} \norm{g_1}_{C^{0,\rho}([0,T])}
    \end{equation*}
    Substituting this into $I_{22}$ yields:
    \begin{align}\label{eq:es-I22}
        I_{22} &\leq C \int_0^t \left( \int_s^t \abs{\tau-s}^\rho \norm{f_1}_{L^2(D)} \norm{g_1}_{C^{0,\rho}([0,T])} \abs{\frac{\partial}{\partial \tau} K_H(\tau,s)} d\tau \right)^2 ds \nonumber\\
        &\leq C \norm{f_1}_{L^2(D)}^2 \norm{g_1}_{C^{0,\rho}([0,T])}^2 \int_0^t \left( \int_s^t (\tau-s)^{\rho+H-\frac{3}{2}} \, d\tau \right)^2 ds \nonumber\\
        &\leq C t^{2H+2\rho} \norm{f_1}_{L^2(D)}^2 \norm{g_1}_{C^{0,\rho}([0,T])}^2.
    \end{align}
    Combining \eqref{eq:es-I1},\eqref{eq:es-I21} and \eqref{eq:es-I22}, we obtain for $H<\frac{1}{2}$ that $\E\big[\abs{W_A(t)}^2\big] < \infty$ for all $t \in [0,T]$.
    
    Therefore, the mean-square integrability is guaranteed, which implies that the equation \eqref{evolution_equation} admits a mild solution. As for the uniqueness, it follows directly from the explicit expression \eqref{eq:mild-solution}, which completes the proof.
    \end{proof}
    \begin{remark}
    When $H = \frac{1}{2}$, the fractional Brownian motion $B^H$ reduces to a standard Brownian motion. In this case, if $f_i \in H^2(D) \cap H_0^1(D) $ for $i=1, 2$, we can prove the unique mild solution $X(t)$ is also a strong solution of the equation \eqref{evolution_equation}. Furthermore, we have the estimate
    \begin{flalign}
        \mathbb{E} [\|X(s)\|^2_{\mathcal{H}}] \le C\left(\|f_1\|^2_{H^2(D)}\|g_1\|^2_{L^2(0, T)}+\|f_2\|^2_{L^2(D)}\|g_2\|^2_{L^2(0, T)}\right). \label{energy_estimate}
    \end{flalign}
    \end{remark}
    
    \begin{theorem}\label{tmh_mean_square_conti}
        Suppose that the assumptions of Theorem \ref{thm_existence} hold. The unique mild solution $X(t)$ of the stochastic MGT equation \eqref{evolution_equation} given by \eqref{eq:mild-solution} is mean-square continuous.
    \end{theorem}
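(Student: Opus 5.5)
The plan is to show that $\E\|X(t+h)-X(t)\|_{\cH}^2\to 0$ as $h\to 0$ for every $t\in[0,T]$. Write $X=W_A+Y$, where $W_A$ is the stochastic convolution \eqref{eq:W_A} and
\[
Y(t)=\int_0^t e^{A(t-s)}\tilde\Phi(s)\,ds .
\]
Then $\E\|X(t+h)-X(t)\|^2\le 2\E\|W_A(t+h)-W_A(t)\|^2+2\|Y(t+h)-Y(t)\|^2$, and the two terms are handled separately.

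\textbf{The deterministic part.} $Y$ is the mild solution of an inhomogeneous Cauchy problem with forcing $\tilde\Phi\in L^2(0,T;\cH)$. Such a mild solution lies in $C([0,T];\cH)$; see Pazy, Ch.~4. Directly, for $h>0$ we split
\[
Y(t+h)-Y(t)=(e^{Ah}-I)Y(t)+\int_t^{t+h}e^{A(t+h-s)}\tilde\Phi(s)\,ds .
\]
The first piece tends to $0$ by strong continuity. The second is bounded by $\mu\sqrt h\,\|f_2\|_{L^2}\|g_2\|_{L^2}$ by Cauchy--Schwarz and \eqref{eq:exponential decay}. The case $h<0$ is symmetric.

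\textbf{The stochastic part.} For $h>0$ we write
\[
W_A(t+h)-W_A(t)=\int_0^{t+h}\psi_h(s)\,dB^H(s),
\]
where $\psi_h(s)=\big(e^{A(t+h-s)}-e^{A(t-s)}\big)\Phi(s)\mathbf 1_{[0,t]}(s)+e^{A(t+h-s)}\Phi(s)\mathbf 1_{(t,t+h]}(s)$. By the Wiener isometry, $\E\|\cdot\|^2=\|\psi_h\|^2_{\cH_0\otimes\cH}$. We then treat the three regimes exactly as in the proof of Theorem \ref{thm_existence}.

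\emph{$H=\tfrac12$.} Here $\|\psi_h\|^2=\int_0^t\|(e^{Ah}-I)e^{A(t-s)}\Phi(s)\|^2ds+\int_t^{t+h}\|e^{A(t+h-s)}\Phi(s)\|^2ds$. The first integral tends to $0$ by dominated convergence, using strong continuity and the bound $(\mu+1)\mu|g_1|\|f_1\|$. The second is at most $C\|f_1\|^2\int_t^{t+h}|g_1|^2$, which tends to $0$.

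\emph{$H>\tfrac12$.} Here $\|\psi_h\|^2$ is the double integral with kernel $\phi$. It is bounded by $\int\!\int a_h(s)a_h(r)\phi(r-s)\,ds\,dr$ with $a_h(s)=\|\psi_h(s)\|_{\cH}$. Lemma \ref{le_inequality} applied with some $p\in(1/H,2]$ gives the bound $C\|a_h\|^2_{L^p}\le C\|a_h\|_{L^2}^2$. The latter tends to $0$ by the $H=\tfrac12$ argument.

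\emph{$H<\tfrac12$.} This is the main obstacle, since $\cH_0$ is not an $L^p$ space and $\psi_h$ has a jump at $s=t$. I would avoid estimating $\psi_h$ in $\cH_0$ directly and instead expand the square:
\[
\E\|W_A(t+h)-W_A(t)\|^2=\E\|W_A(t+h)\|^2+\E\|W_A(t)\|^2-2\E\langle W_A(t+h),W_A(t)\rangle .
\]
Each term is expressed through \eqref{eq:H<1/2} with the operators $K_H^*(t+h)$ and $K_H^*(t)$. The task is then to show that $\E\|W_A(t+h)\|^2\to\E\|W_A(t)\|^2$ and $\E\langle W_A(t+h),W_A(t)\rangle\to\E\|W_A(t)\|^2$. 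For this I would use dominated convergence in the integrands of $I_1$, $I_{21}$, $I_{22}$ as functions of the upper limit. The bounds \eqref{eq_kernel_2} and \eqref{eq_kernel_4}, together with the H\"older condition $\rho>\tfrac12-H$ and the $\mathcal D(A)$-regularity of $\Phi$ used for $I_{21}$, supply integrable majorants of the form $(\tau-s)^{\rho+H-3/2}$ and $(\tau-s)^{H-1/2}$. These majorants are uniform in $h$, and pointwise convergence follows from strong continuity of $e^{At}$.

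An alternative route, if the cross term proves awkward, is to approximate $g_1$ by smooth functions, for which $W_A$ can be integrated by parts into a pathwise continuous expression. The approximation error would then be controlled uniformly in $t$ via the bounds \eqref{eq:es-I1}, \eqref{eq:es-I21} and \eqref{eq:es-I22}, which depend linearly on $\|g_1\|^2_{C^{0,\rho}}$.

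The case $h<0$ is handled by swapping the roles of $t$ and $t+h$. Combining the deterministic and stochastic parts yields mean-square continuity of $X$.
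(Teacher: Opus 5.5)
Your proposal is correct in outline. For $H\ge\tfrac12$ and the deterministic part it is close to the paper's argument, but for $H<\tfrac12$ it takes a different route.

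\textbf{How the paper avoids the jump.} The paper never forms the single integrand $\psi_h$. It bounds $\E\|W_A(t+h)-W_A(t)\|_{\cH}^2$ by $C(K_1+K_2)$. Here $K_1$ is the second moment of $\int_0^t (e^{A(t+h-s)}-e^{A(t-s)})\Phi(s)\,dB^H(s)$ and $K_2$ that of $\int_t^{t+h}e^{A(t+h-s)}\Phi(s)\,dB^H(s)$. Each is a Wiener integral over its own interval with a jump-free integrand, so the obstacle you identified never arises.
\begin{itemize}
\item $K_2$ is reduced, via stationarity of increments ($\widetilde B^H(s)=B^H(t+s)-B^H(t)$), to the bound of Theorem \ref{thm_existence} on $[0,h]$.
\item For $H<\tfrac12$, $K_1$ splits into $K_{11}$, handled by dominated convergence, and $K_{12}$, estimated quantitatively. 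The tools are $\|(e^{Ah}-I)x\|\le Ch\|Ax\|$ for $x\in\mathcal D(A)$, the H\"older continuity of $g_1$, and the interpolation $\min\{h,|\tau-s|\}\le h^{1-\theta}|\tau-s|^{\theta}$ with $\tfrac12-H<\theta<1$. This gives an explicit rate in $h$.
\end{itemize}

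\textbf{What your polarization route still needs.} It is purely qualitative and relies on two facts you have not stated.
\begin{itemize}
\item The cross moment $\E\langle W_A(t+h),W_A(t)\rangle$ must be computed on the common interval $[0,t+h]$. This requires $K_H^*(t+h)(\varphi\mathbf 1_{[0,t]})=\mathbf 1_{[0,t]}\,K_H^*(t)\varphi$, which follows in one line from \eqref{eq:K_H^*} and $\int_t^{t+h}\partial_\tau K_H(\tau,s)\,d\tau=K_H(t+h,s)-K_H(t,s)$.
\item On the strip $s\in(t,t+h)$ there is no $h$-uniform majorant for $K_H(t+h,s)^2$, because it blows up at the moving endpoint $s=t+h$. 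That piece must be estimated directly via \eqref{eq_kernel_2} and \eqref{eq_kernel_4}. Alternatively, use a uniform $L^p$ bound with $p>1$ together with Vitali's theorem. The same issue appears when you swap roles for $h<0$.
\end{itemize}
With these additions your argument goes through.

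\textbf{Smaller points.} For $H>\tfrac12$, your reduction via Lemma \ref{le_inequality} with $p=2$ to the $L^2$ quantity of the case $H=\tfrac12$ is slightly cleaner than the paper's dominated convergence for $K_1$ plus a separate bound for $K_2$. In your fallback route, smooth functions are not dense in $C^{0,\rho}$. The approximation error would therefore have to be controlled in $C^{0,\rho'}$ for some $\tfrac12-H<\rho'<\rho$.
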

    \begin{proof}
         By standard $C_0$-semigroup theory and the integrability of $\tilde{\Phi}$, the deterministic integral component of the mild solution \eqref{eq:mild-solution} is mean-square continuous. Thus, it remains only to prove the mean-square continuity of the stochastic convolution \eqref{eq:W_A}.
        
        For $h>0$ sufficiently small such that $t+h \le T$, we have
        \begin{flalign*}
            &\mathbb{E}\left[\left\| W_A(t+h) - W_A(t) \right\|_{\mathcal{H}}^2\right]\\
    \leq &C \mathbb{E}\left[ \left\| \int_0^t \left( e^{A(t+h-s)}-e^{A(t-s)}\right) \Phi(s)\,\mathrm{d}B^H(s) \right\|_{\mathcal H}^2 \right]+ C\mathbb{E} \left[ \left\| \int_t^{t+h} e^{A(t+h-s)}\Phi(s)\,\mathrm{d}B^H(s) \right\|_{\mathcal H}^2 \right] \\
    =: &C (K_1+K_2).
    \end{flalign*}
    
    For the term $K_2$, regardless of $H \in (0,1)$, we can define $\tilde{B}^H(s) = B^H(t+s) - B^H(t)$. By the stationarity of the increments of fractional Brownian motion, $\tilde{B}^H$ is a fractional Brownian motion with the same Hurst parameter. Changing the integration variable \(s\mapsto t+s\) yields
    \begin{flalign*}
        K_2 = \mathbb{E}\left[\left\| \int_0^h e^{A(h-s)}\Phi(t+s)\,\mathrm{d}\widetilde{B}^H(s)\right\|_{\mathcal H}^2\right]=\int_0^h \left\|K^*_H(h)\left(e^{A(h-\cdot)}\Phi(t+\cdot)\right)(s)\right\|^2_{\mathcal{H}} \;ds.
    \end{flalign*}
    Applying the finite second moment bounds \eqref{eq:es-I1+I2} established in \autoref{thm_existence} on the interval $[0, h]$, the absolute continuity of the Lebesgue integral ensures $\lim_{h\to 0} K_2 = 0$.

    Next, we estimate $K_1$ depending on the Hurst parameter $H$.\\
    \noindent\textbf{Case 1: \(H \ge \frac12\).} By the Itô isometry \eqref{eq:H=1/2} for $H=1/2$ and the covariance isometry \eqref{eq:H>1/2} for $H>1/2$, the term $K_1$ is bounded by integrals involving $\|(e^{A(t+h-s)} - e^{A(t-s)})\Phi(s)\|_{\mathcal{H}}$. For every fixed $s \in [0,t]$, the strong continuity of the semigroup yields
    \begin{flalign*}
        \lim_{h\to 0}\|(e^{A(t+h-s)} - e^{A(t-s)})\Phi(s)\|_{\mathcal{H}} = 0
    \end{flalign*}
    Since the semigroup is uniformly bounded, the integrand norm satisfies:
    \begin{flalign}\label{eq:es-conbound}
        \left\|\left(e^{A(t+h-s)}-e^{A(t-s)}\right)\Phi(s)\right\|_{\mathcal H}^2\leq C|g_1(s)|^2\|f_1\|_{H^1(D)}^2.
    \end{flalign}
    Therefore, the Dominated Convergence Theorem implies $\lim_{h\to 0} K_1 = 0$.

    \noindent\textbf{Case 2: \(H < \frac12\).} Similar to the estimate \eqref{eq:es-I1+I2}, we can decompose $K_1$ as
    \begin{align*}
        K_1 &\le C \int_0^t K_H(t,s)^2 \left\| \left(e^{A(t+h-s)} - e^{A(t-s)}\right) \Phi(s) \right\|_{\mathcal{H}}^2 ds \\
         &\quad + C \int_0^t \left( \int_s^t \left\| \left(e^{A(t+h-s)} - e^{A(t-s)}\right)\Phi(s) - \left(e^{A(t+h-\tau)} - e^{A(t-\tau)}\right)\Phi(\tau) \right\|_{\mathcal{H}} \left| \frac{\partial K_H}{\partial \tau}(\tau,s) \right| d\tau \right)^2 ds \\
        &=: K_{11} + K_{12}.
    \end{align*} 
    For $K_{11}$, by \eqref{eq:es-conbound}, the integrand is also dominated by an integrable function:
    \begin{flalign*}
        K_H(t,s)^2 \left\| \left(e^{A(t+h-s)} - e^{A(t-s)}\right) \Phi(s) \right\|_{\mathcal{H}}^2 \leq  C K_H(t,s)^2 \|g_1\|_{L^\infty(0,T)}^2 \|f_1\|_{H^1(D)}^2,
    \end{flalign*}
    since $\int_0^t K_H(t,s)^2 ds = C t^{2H} < \infty$. Then the Dominated Convergence Theorem similarly yields $\lim_{h\to 0} K_{11} = 0$. \\
    For $K_{12}$, we decompose the operator differences to obtain
    \begin{flalign*}
        K_{12}\leq &C\int_0^t \left( \int_s^t \left\| \left( e^{A(t+h-s)}-e^{A(t-s)} \right) \bigl(\Phi(s)-\Phi(\tau)\bigr) \right\|_{\mathcal H} \left| \frac{\partial K_H}{\partial\tau}(\tau,s) \right| \,\mathrm{d}\tau \right)^2 \,\mathrm{d}s\\
        &+ C\int_0^t \left( \int_s^t \left\| \left(e^{Ah}-I \right) \left( e^{A(t-s)}-e^{A(t-\tau)} \right)\Phi(\tau) \right\|_{\mathcal H} \left| \frac{\partial K_H}{\partial\tau}(\tau,s) \right| \,\mathrm{d}\tau \right)^2 \,\mathrm{d}s.
    \end{flalign*}
    For the first term, since $g_1 \in C^{0,\rho}([0,T])$, we obtain
    \begin{align*}
        \left\| \left(e^{A(t+h-s)} - e^{A(t-s)}\right)\left(\Phi(s) - \Phi(\tau)\right) \right\|_{\mathcal{H}} \le C h \, |s-\tau|^{\rho} \, \|f_1\|_{H^1(D)}.
    \end{align*}
    For the second term, utilizing the standard bounds of the semigroup $e^{At}$, the term is bounded by both $C|g_1(\tau)|\,|\tau-s|\,\|f_1\|_{H^1(D)}$ and $C h\,|g_1(\tau)|\,\|f_1\|_{H^1(D)}$.
    Choosing $\theta$ such that $\frac{1}{2} - H < \theta < 1$ and using the interpolation $\min\{h, |\tau-s|\} \le h^{1-\theta} |\tau-s|^\theta$, we deduce
    \begin{align*}
        \left\| \left(e^{Ah} - I\right)\left(e^{A(t-s)} - e^{A(t-\tau)}\right)\Phi(\tau) \right\|_{\mathcal{H}} \le C h^{1-\theta} |\tau-s|^\theta |g_1(\tau)| \, \|f_1\|_{H^1(D)}.
    \end{align*}
    Then using the estimate \eqref{eq_kernel_4}, we arrive at
    \begin{align*}
        K_{12} &\le C h^2 \left\| f_1 \right\|_{H^1(D)}^2 \left\| g_1 \right\|_{C^{0,\rho}([0,T])}^2 \int_0^t \left( \int_s^t |\tau - s|^{\rho
        + H - \frac{3}{2}} \,d\tau \right)^2 ds \\
        &\quad + C h^{2-2\theta} \left\| f_1 \right\|_{H^1(D)}^2 \left\| g_1 \right\|_{L^\infty(0,T)}^2 \int_0^t \left( \int_s^t |\tau - s|^{\theta + H - \frac{3}{2}} \,d\tau \right)^2 ds \\
        &\le C h^2 \left\| f_1 \right\|_{H^1(D)}^2 \left\| g_1 \right\|_{C^{0,\rho}([0,T])}^2 t^{2\rho + 2H} + C h^{2-2\theta} \left\| f_1 \right\|_{H^1(D)}^2 \left\| g_1 \right\|_{L^\infty(0,T)}^2 t^{2\theta + 2H}.
    \end{align*}
    Both terms strictly vanish as $h \to 0$, hence $\lim_{h\to 0} K_{12} = 0$.

    Consequently, $\lim_{h\to 0} \mathbb{E}[\| W_A(t+h) - W_A(t) \|_{\mathcal{H}}^2] = 0$, proving the mean-square continuity. 
    \end{proof}

Since the mild solution $X(t)$ is mean-square continuous and adapted to the filtration generated by the fractional Brownian motion, it admits a predictable modification. This rigorously satisfies the predictability requirement for the mild solution stated in Definition \ref{def:mildsol}. 

Now we give a useful representation of the mild solution. 
By the property of $C_0$-semigroup, $e^{A(t-s)}\Phi(s, x)$ is also a solution of a deterministic MGT equation.
\begin{theorem}\label{thmexplicit_form__mildsolution}
    Suppose that the assumptions of Theorem \ref{thm_existence} hold. For $H \in (0,1)$ and $T>0$, the unique mild solution $u(t)$ of the equation \eqref{MGT} can be written as 
\begin{align*}
	   u(x,t) = \int_0^t v_1(x,t-s) g_1(s) dB^H(s)+\int_0^t v_2(x,t-s)g_2(s) ds,
    \end{align*}
    where for $i=1,2$, $v_i(x, t)$ is the solution to 
    \begin{equation}
        \begin{cases}
		      \partial^3_{t} v_i + \alpha \partial^2_t v_i - c^2 \Delta v_i - b \Delta \partial_t v_i = 0 & D \times (0,T), \\
		      v_i = 0 & \partial D \times (0,T), \\
		      v_i(x,0) = 0, \quad \partial_t v_i(x,0) = 0, \quad \partial_t^2 v_i(x,0) = f_i(x) & D.
        \end{cases} \label{MGT_2}
    \end{equation}
    \end{theorem}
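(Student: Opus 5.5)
The plan is to read off the first component of the mild solution \eqref{eq:mild-solution} and identify the operator $e^{At}$ acting on the vectors $\Phi(s)$ and $\tilde{\Phi}(s)$ with solutions of \eqref{MGT_2}. Write $P_1$ for the projection $\mathcal{H}\to\dot{H}^2(D)$ onto the first component; it is a bounded linear operator. Since $\Phi(s)=g_1(s)\,[0,0,f_1]^T$ and $\tilde{\Phi}(s)=g_2(s)\,[0,0,f_2]^T$ factor into a scalar time function times a fixed vector, we get $e^{A(t-s)}\Phi(s)=g_1(s)\,e^{A(t-s)}[0,0,f_1]^T$, and similarly for $\tilde\Phi$. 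The first step is therefore to show that $V_i(t):=e^{At}[0,0,f_i]^T$ equals $[v_i(t),\partial_t v_i(t),\partial_{tt}v_i(t)]^T$, where $v_i$ solves \eqref{MGT_2} in the semigroup sense. For $f_1\in H_0^1(D)$, the vector $[0,0,f_1]^T$ lies in $\mathcal{D}(A)$, so $V_1$ is a classical solution of $V'=AV$. Writing out the rows of $A$ gives $V_1=[v,v_t,v_{tt}]^T$ with $v_{ttt}=c^2\Delta v+b\Delta v_t-\alpha v_{tt}$, homogeneous Dirichlet data (built into $\dot H^s$), and initial data $(0,0,f_1)$. That is exactly \eqref{MGT_2}. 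For $f_2\in L^2(D)$ only, the same identification holds with $v_2$ understood as the mild/weak solution, by density of $H_0^1$ in $L^2$ and continuity of $e^{At}$ on $\mathcal{H}$.

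Next I would apply $P_1$ to \eqref{eq:mild-solution}. For the Bochner integral, $P_1\int_0^t e^{A(t-s)}\tilde\Phi(s)\,ds=\int_0^t g_2(s)\,v_2(\cdot,t-s)\,ds$, because bounded operators commute with Bochner integrals. For the stochastic term I must justify $P_1\int_0^t e^{A(t-s)}\Phi(s)\,dB^H(s)=\int_0^t v_1(\cdot,t-s)g_1(s)\,dB^H(s)$. My approach is to check this first for step-function approximations, where the Wiener integral \eqref{eq:defWiener} is a finite sum and linearity of $P_1$ makes the identity trivial. I would then pass to the limit using the isometry between $\mathcal{H}_0$ (Hilbert-valued version) and $L^2(\Omega)$. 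This requires that $s\mapsto v_1(\cdot,t-s)g_1(s)$ be an admissible integrand, i.e.\ that its $\mathcal H_0$-type norm is controlled by that of $e^{A(t-\cdot)}\Phi(\cdot)$. This follows since $\|P_1 x\|\le \|x\|_{\mathcal H}$, so each of the bounds in Cases 1–3 of the proof of Theorem \ref{thm_existence} transfers verbatim, including the $I_1,I_{21},I_{22}$ estimates for $H<\tfrac12$.

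The main obstacle is this interchange of $P_1$ with the fBm Wiener integral when $H<\frac12$. There the integrand space is defined via $K_H^*(t)$ rather than a plain $L^2$ norm, so the approximation by step functions must be done in the norm $\int_0^t\|K_H^*(t)(\cdot)(s)\|^2ds$. I would handle it by noting $P_1K_H^*(t)\psi=K_H^*(t)(P_1\psi)$, which holds since $K_H^*$ acts only in time, together with the Hölder regularity of $g_1$ and the Lipschitz-in-time bound $\|(e^{A\tau}-e^{A s})[0,0,f_1]^T\|\le C|\tau-s|\,\|f_1\|_{H_0^1}$ used for $I_{21}$. Together these give convergence of the approximations, and then equality holds a.s.\ in $L^2(\Omega;\dot H^2(D))$. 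Finally, uniqueness is inherited from Theorem \ref{thm_existence}, so the representation holds for the unique mild solution.
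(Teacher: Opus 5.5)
Your proposal is correct and follows essentially the paper's argument. The paper only remarks that $e^{A(t-s)}\Phi(s)$ is the state of a deterministic MGT solution with data $g_1(s)[0,0,f_1]^T$ and reads off the first component. Your step-function/isometry justification for commuting $P_1$ with the Wiener integral is exactly the paper's Lemma \ref{lemma_exchange} applied with $\mathcal{A}=P_1$. One simplification: your separate $H<\frac12$ discussion is not needed. Once the integrand lies in the $\mathcal{H}$-valued space $\mathcal{H}_0$, as Theorem \ref{thm_existence} already uses, boundedness of $P_1$ alone gives the interchange for every $H\in(0,1)$.
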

\subsection{Hidden regularity}
\noindent
We can observe that the inverse problem considered in this paper requires the normal derivative of the solution to be defined on the boundary. Since stochastic wave equation satisfies certain extra regularity\cite[Proposition 3.1]{L2011GlobalUF}, it's natural to expect the Moore-Gibson-Thompson equation also satisfies an analogous result. 

\begin{lemma} \label{lemma_exchange}
    Let $E_1$ and $E_2$ be separable Hilbert spaces. Let $\mathcal{A}: E_1 \to E_2$ be a bounded linear operator. If $\Psi$ is a deterministic function such that $\Psi \in \mathcal{H}_0([0,T]; E_1)$, then $\int_0^T \Psi(s) dB^H(s) \in E_1$ $\mathbb{P}$-a.s., and
    \begin{equation}\label{eq:change}
        \mathcal{A} \int_0^T \Psi(s) d B^H(s) = \int_0^T \mathcal{A} \Psi(s) d B^H(s) \quad \mathbb{P}\text{-a.s.}
    \end{equation}
\end{lemma}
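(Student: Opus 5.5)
The plan is to build the vector-valued Wiener integral from step functions and pass to the limit by density, checking the interchange with $\mathcal{A}$ at each stage. For a step function $\Psi=\sum_{i=1}^n a_i\mathbf{1}_{(t_i,t_{i+1}]}$ with $a_i\in E_1$, definition \eqref{eq:defWiener} gives $\int_0^T\Psi\,dB^H=\sum_i a_i\,(B^H(t_{i+1})-B^H(t_i))$. This is a finite linear combination of elements of $E_1$ with real Gaussian coefficients, so it lies in $E_1$ almost surely. Applying the linear map $\mathcal{A}$ term by term gives
\begin{equation*}
\mathcal{A}\int_0^T\Psi\,dB^H=\sum_{i=1}^n (\mathcal{A}a_i)\bigl(B^H(t_{i+1})-B^H(t_i)\bigr)=\int_0^T\mathcal{A}\Psi\,dB^H,
\end{equation*}
and this holds pathwise, not just almost surely.

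Next we fix the meaning of $\mathcal{H}_0([0,T];E_1)$: it is the closure of the $E_1$-valued step functions under the norm for which the isometry
\begin{equation*}
\E\Bigl\|\int_0^T\Psi\,dB^H\Bigr\|_{E_1}^2=\|\Psi\|^2_{\mathcal{H}_0([0,T];E_1)}
\end{equation*}
holds. Equivalently, expand in an orthonormal basis $\{e_k\}$ of $E_1$. Then $\|\Psi\|^2=\sum_k\|\langle\Psi,e_k\rangle_{E_1}\|^2_{\mathcal{H}_0}$, which is exactly the quantity appearing in the covariance formulas \eqref{eq:H=1/2}--\eqref{eq:H<1/2}. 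For a general $\Psi$ we take step functions $\Psi_n\to\Psi$ in this norm. By the isometry, $\int\Psi_n\,dB^H$ is Cauchy in $L^2(\Omega;E_1)$, and its limit defines $\int\Psi\,dB^H\in L^2(\Omega;E_1)$. Passing to a subsequence that converges almost surely shows that the integral is $E_1$-valued $\mathbb{P}$-a.s., which is the first claim.

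For the interchange, the key estimate is that $\mathcal{A}$ acts boundedly from $\mathcal{H}_0([0,T];E_1)$ to $\mathcal{H}_0([0,T];E_2)$, namely $\|\mathcal{A}\Psi\|_{\mathcal{H}_0(E_2)}\le\|\mathcal{A}\|\,\|\Psi\|_{\mathcal{H}_0(E_1)}$. I would prove this on step functions using the isometry together with $\E\|\mathcal{A}Y\|^2_{E_2}\le\|\mathcal{A}\|^2\E\|Y\|^2_{E_1}$, applied with $Y=\int\Psi\,dB^H$. The bound then extends by density, so $\mathcal{A}\Psi_n\to\mathcal{A}\Psi$ in $\mathcal{H}_0([0,T];E_2)$ and hence $\int\mathcal{A}\Psi_n\,dB^H\to\int\mathcal{A}\Psi\,dB^H$ in $L^2(\Omega;E_2)$. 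On the other side, $\mathcal{A}\int\Psi_n\,dB^H\to\mathcal{A}\int\Psi\,dB^H$ in $L^2(\Omega;E_2)$, because $\mathcal{A}$ is bounded. The left-hand sides agree for every $n$ by the step-function case, so the two limits coincide in $L^2(\Omega;E_2)$, that is, $\mathbb{P}$-a.s., which gives \eqref{eq:change}.

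The main obstacle is the case $H<\tfrac12$. There $\mathcal{H}_0$ is defined through the operator $K_H^*$ of \eqref{eq:K_H^*} and contains distributions, so we cannot argue pointwise in $s$. This is exactly why the boundedness of $\mathcal{A}$ on $\mathcal{H}_0$ is routed through the isometry and the step-function identity rather than through the kernel formulas. Alternatively, one can observe that $K_H^*$ is built only from linear operations in time, so it commutes with the constant operator $\mathcal{A}$. This gives $\|K_H^*(\mathcal{A}\Psi)\|_{E_2}=\|\mathcal{A}K_H^*\Psi\|_{E_2}\le\|\mathcal{A}\|\,\|K_H^*\Psi\|_{E_1}$ directly, which serves as a cross-check.
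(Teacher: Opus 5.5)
Your proposal is correct and follows essentially the same route as the paper: you verify the identity for $E_1$-valued step functions directly from \eqref{eq:defWiener}, approximate a general $\Psi$ in $\mathcal{H}_0([0,T];E_1)$, and identify the two limits in $L^2(\Omega;E_2)$ using the isometry and the boundedness of $\mathcal{A}$. You go slightly further than the paper: it merely asserts that $\mathcal{A}$ induces a continuous map, while you derive the bound $\|\mathcal{A}\Psi\|_{\mathcal{H}_0(E_2)}\le\|\mathcal{A}\|\,\|\Psi\|_{\mathcal{H}_0(E_1)}$ from the step-function identity. One correction to your side remark, which does not affect the argument: it is for $H>\tfrac12$, not $H<\tfrac12$, that the completion $\mathcal{H}_0$ contains distributions, whereas for $H<\tfrac12$ it is a genuine function space.
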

\begin{proof}
    We first prove the result for $E_1$-valued step functions. Suppose $\Psi(s)$ has the form
    \begin{equation*}
        \Psi(s) = \sum_{j=0}^{m-1} a_j \mathbf{1}_{(t_j, t_{j+1}]}(s), \quad a_j \in  E_1.
    \end{equation*}
    By the definition \eqref{eq:defWiener}, it trivially holds that
    \begin{align*}
    \mathcal{A} \int_0^T \Psi(s) d B^H(s) = \mathcal{A} \sum_{j=0}^{m-1} a_j (B^H(t_{j+1}) - B^H(t_j)) = \int_0^T \mathcal{A} \Psi(s) d B^H(s).
    \end{align*}
    
    For a general deterministic function $\Psi \in \mathcal{H}_0([0,T]; E_1)$, by the density of step functions, there exists a sequence of $E_1$-valued step functions $\{\Psi_n\}$ such that 
    \begin{equation*}
        \Psi_n \to \Psi \quad \text{in } \mathcal{H}_0([0,T]; E_1).
    \end{equation*}
    As $\mathcal{A}: E_1 \to E_2$ is a bounded linear operator, it induces a continuous mapping from $\mathcal{H}_0([0,T]; E_1)$ to $\mathcal{H}_0([0,T]; E_2)$, which directly implies $\mathcal{A}\Psi_n \to \mathcal{A}\Psi$ in $\mathcal{H}_0([0,T]; E_2)$. By the isometry property of the fractional Wiener integral, we have
    \begin{align*}
        \int_0^T \Psi_n(s) dB^H(s) \to \int_0^T \Psi(s) dB^H(s) \quad \text{in } L^2(\Omega; E_1), \\
        \int_0^T \mathcal{A}\Psi_n(s) dB^H(s) \to \int_0^T \mathcal{A}\Psi(s) dB^H(s) \quad \text{in } L^2(\Omega; E_2).
    \end{align*}
    By the continuity of $\mathcal{A}$ and the uniqueness of limits in $L^2(\Omega; E_2)$, applying $\mathcal{A}$ to the first limit immediately yields
    \begin{align*}
        \mathcal{A}\int_0^T \Psi(s) dB^H(s) = \int_0^T \mathcal{A}\Psi(s) dB^H(s) \quad \text{in } L^2(\Omega; E_2).
    \end{align*}
    Equality in the $L^2(\Omega;E_2)$ implies $\mathbb{P}$-a.s. equality. This completes the proof. 
\end{proof}
The next proposition follows from the special case $m=0, 1$ of \cite[Theorem 3.1]{Fu_2024}.
\begin{proposition}\label{thm_hiddenregularity}
Assume $\partial D$ is $C^\infty$ and $v$ is the unique solution of 
\begin{equation*}
	\begin{cases}
		\partial^3_{t} v + \alpha \partial^2_t v - c^2 \Delta v - b \Delta \partial_t v = 0 & D \times (0,T), \\
		v = 0 & \partial D \times (0,T), \\
		v(x,0) = 0, \quad \partial_t v(x,0) = 0, \quad \partial_t^2 v(x,0) = f(x) & D.
	\end{cases}
\end{equation*}
If $f \in H^2(D) \cap H_0^1(D)$, then $v \in C^2\left([0, T]; H^2(D) \cap H_0^1(D)\right)$, $\partial_{\nu} v \in H^{3}\left(0, T; L^2(\partial D)\right)$ and 
\begin{equation}\label{es-m2}
    \|\partial_{\nu} v\|_{H^{3}\left(0, T; L^2(\partial D)\right)} \le C\|f\|_{H^2(D)}.
\end{equation}
If $f \in H^1_0(D)$, then $v \in C^1\left([0, T]; H^2(D) \cap H_0^1(D)\right)$, $\partial_{\nu} v \in H^{2}\left(0, T; L^2(\partial D)\right)$ and 
\begin{equation}\label{es-m1}
    \|\partial_{\nu} v\|_{H^{2}\left(0, T; L^2(\partial D)\right)} \le C\|f\|_{H^1(D)}.
\end{equation}
If $f \in L^2(D)$, then $v \in C\left([0, T]; H^2(D) \cap H_0^1(D)\right)$, $\partial_{\nu} v \in H^{1}\left(0, T; L^2(\partial D)\right)$ and 
\begin{equation}\label{es-m0}
    \|\partial_{\nu} v\|_{H^{1}\left(0, T; L^2(\partial D)\right)} \le C\|f\|_{L^2(D)}.
\end{equation}
\end{proposition}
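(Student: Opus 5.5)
The plan is to prove Proposition~\ref{thm_hiddenregularity} by a multiplier (energy) argument at three regularity levels, following the structure of \cite[Theorem 3.1]{Fu_2024}. It is convenient to rewrite the equation using the auxiliary variable $z := \partial_t v + \frac{c^2}{b} v$. A direct computation gives
\begin{equation*}
\partial_t^2 z - b\Delta z = -\gamma\, \partial_t^2 v, \qquad \gamma = \alpha - \tfrac{c^2}{b},
\end{equation*}
where $\partial_t^2 v = \partial_t z - \frac{c^2}{b}\partial_t v$. Thus $z$ solves a wave equation with speed $\sqrt{b}$, Dirichlet boundary data $z|_{\partial D}=0$, and a right-hand side of lower order. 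This right-hand side is controlled by the energy of the semigroup $e^{At}$ on $\mathcal{H}$, which is bounded by \eqref{eq:exponential decay}. The initial data for $z$ are $z(0)=0$ and $\partial_t z(0) = f$. Moreover, $v$ is recovered from $z$ by solving the ODE $\partial_t v + \frac{c^2}{b}v = z$ with $v(0)=0$, so $\partial_\nu v = \int_0^t e^{-\frac{c^2}{b}(t-s)}\partial_\nu z(s)\,ds$. Hence regularity of $\partial_\nu v$ in $H^{k+1}(0,T;L^2(\partial D))$ follows from regularity of $\partial_\nu z$ in $H^k(0,T;L^2(\partial D))$.

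\textbf{Base case: $f\in L^2(D)$.} The initial data $(z,\partial_t z)(0)=(0,f)\in H_0^1\times L^2$ is a finite-energy pair for the wave equation, so I apply the classical hidden regularity estimate for the Dirichlet wave equation with an $L^1(0,T;L^2)$ source. This estimate comes from the Rellich-type multiplier $h\cdot\nabla z$, where $h$ is a $C^1$ extension of $\nu$ to $\bar D$. It gives
\begin{equation*}
\|\partial_\nu z\|_{L^2(\partial D\times(0,T))} \le C\left(\|f\|_{L^2(D)} + \|\gamma\,\partial_t^2 v\|_{L^1(0,T;L^2(D))}\right).
\end{equation*}
The source is bounded using $\|\partial_t^2 v\|_{L^2(D)} \le \|e^{At}(0,0,f)\|_{\mathcal{H}} \le \mu\|f\|_{L^2(D)}$. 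Integrating in time yields \eqref{es-m0}. The claim $v\in C([0,T];H^2\cap H^1_0)$ follows from elliptic regularity applied to $b\Delta(\partial_t v + \frac{c^2}{b} v) = \partial_t^3 v+\alpha\partial_t^2 v$ together with the semigroup regularity. I would only sketch this part.

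\textbf{Higher levels: $f\in H_0^1(D)$ and $f\in H^2(D)\cap H_0^1(D)$.} I differentiate in time. The function $w:=\partial_t v$ solves the same MGT equation with initial data
\begin{equation*}
w(0)=0,\quad \partial_t w(0)=f,\quad \partial_t^2 w(0) = \partial_t^3 v(0) = -\alpha f,
\end{equation*}
where the last value uses the equation at $t=0$, since $\Delta v(0)=\Delta\partial_t v(0)=0$. For general initial data the same multiplier estimate holds, with the full $\mathcal{H}$-norm of the data on the right: $\|\partial_\nu w\|_{H^1(0,T;L^2(\partial D))}\le C\|(0,f,-\alpha f)\|_{\mathcal{H}}\le C\|f\|_{H^1(D)}$. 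This gives \eqref{es-m1}.

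Differentiating once more, $\partial_t^2 v$ has initial data
\begin{equation*}
(f,\,-\alpha f,\,\alpha^2 f + b\Delta f),
\end{equation*}
whose $\mathcal{H}$-norm is bounded by $C\|f\|_{H^2(D)}$. The same argument then gives \eqref{es-m2}. The compatibility conditions needed for these data to lie in $\mathcal{H}$ (Dirichlet trace zero in the first two components) are exactly $f\in H_0^1(D)$, and additionally $f\in H^2(D)$ at the top level.

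\textbf{Main obstacle.} The key step is the multiplier identity for general initial data in $\mathcal{H}$, because the lower-order coupling term $\gamma\partial_t^2 v$ must be absorbed. I expect this to go through with the $z$-reduction. The boundary term $\int_{\partial D}(h\cdot\nu)|\partial_\nu z|^2$ appears exactly, and all interior terms are bounded by $T\sup_t\|(z,\partial_t z)\|_{H^1\times L^2}^2$ plus the source. That supremum is controlled by $\mu\|X(0)\|_{\mathcal{H}}^2$ through the semigroup bound. The $C^\infty$ boundary assumption enters only through the elliptic regularity and the existence of the vector field $h$.
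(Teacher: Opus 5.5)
Your argument is correct. It takes a genuinely different route from the paper, which gives no proof and simply invokes \cite[Theorem 3.1]{Fu_2024} with $m=0,1$.

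You make the result self-contained. The substitution $z=\partial_t v+\frac{c^2}{b}v$ is computed correctly: $\partial_t^2 z-b\Delta z=-\gamma\partial_t^2 v$, with $z(0)=0$ and $\partial_t z(0)=f$. This reduces everything to the classical Dirichlet hidden-regularity estimate for a wave equation with finite-energy data and a source in $C([0,T];L^2(D))$. The ODE $\partial_t v+\frac{c^2}{b}v=z$ then turns $\partial_\nu z\in L^2(\partial D\times(0,T))$ into $\partial_\nu v\in H^1(0,T;L^2(\partial D))$.

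The bootstrap is cleanest in semigroup language. The datum $X_0=(0,0,f)$ lies in $\mathcal{D}(A)$ exactly when $f\in H^1_0(D)$, and in $\mathcal{D}(A^2)$ exactly when $f\in H^2(D)\cap H^1_0(D)$. Moreover $AX_0=(0,f,-\alpha f)$ and $A^2X_0=(f,-\alpha f,\alpha^2 f+b\Delta f)$, so $\partial_t^j v$ is the first component of $e^{At}A^jX_0$. This gives the $C^j([0,T];H^2(D)\cap H^1_0(D))$ claims directly. Applying your base estimate to general data in $\mathcal{H}$ then gives \eqref{es-m1} and \eqref{es-m2}.

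What your route buys is transparency. It explains where the compatibility conditions come from. It uses only a $C_0$-semigroup bound on $[0,T]$ for the operator $A$ on $\mathcal{H}$; exponential stability and $\gamma>0$ play no role. It also shows that a $C^2$ boundary is enough for the ingredients, not $C^\infty$. The citation buys brevity and higher-order versions for free.

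Two small points to tidy.
\begin{enumerate}
\item For $f\in L^2(D)$, the regularity $v\in C([0,T];H^2(D)\cap H_0^1(D))$ does not follow from elliptic regularity applied to $b\Delta z=\partial_t^3v+\alpha\partial_t^2v$ at fixed $t$, since the right-hand side is then only in $H^{-1}(D)$. It is simply well-posedness on $\mathcal{H}$, whose first component is $\dot H^2(D)$. Alternatively, solve $b\,\partial_t(\Delta v)+c^2\Delta v=\partial_t^3 v+\alpha\partial_t^2 v$ in time and integrate by parts so that only $\partial_t^2 v$ and $\partial_t v$ appear.
\item The representation $\partial_\nu v=\int_0^t e^{-\frac{c^2}{b}(t-s)}\partial_\nu z(s)\,ds$ and the multiplier identity should be derived for smooth data and extended by density. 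This is needed because $\partial_\nu$ is not bounded on $H^1_0(D)$.
\end{enumerate}
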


\begin{theorem}\label{thm_hidden_regularity}
    Suppose that the assumptions of Theorem \ref{thm_existence} hold. Additionally, assume the boundary $\partial D$ is $C^\infty$, then the unique mild solution $(u, u_t, u_{tt})$ of equation \eqref{MGT} satisfies
    \begin{equation}\label{es-H>1/2}
        \text{sup}_{0 \le t \le T}\|\partial_{\nu} u(t)\|_{L^2\left( \Omega; L^2(\partial D)\right)} \le C\left(\|f_1\|_{H^1(D)}\|g_1\|_{L^2([0, T])}+\|f_2\|_{L^2(D)}\|g_2\|_{L^2([0, T])}\right),  
    \end{equation} 
    when $H \ge \frac{1}{2}$ and 
    \begin{equation}\label{es-H<1/2}
    \text{sup}_{0 \le t \le T}\|\partial_{\nu} u(t)\|_{L^2\left( \Omega; L^2(\partial D)\right)}  \le C \left( \|f_1\|_{H^1(D)}\|g_1\|_{C^{0, \rho}([0, T])} + \|f_2\|_{L^2(D)}\|g_2\|_{L^2(0,T)} \right),
    \end{equation}
    when $H<\frac{1}{2}$.
\end{theorem}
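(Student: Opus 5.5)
We start from the representation of Theorem \ref{thmexplicit_form__mildsolution},
\[
u(x,t)=\int_0^t v_1(x,t-s)g_1(s)\,dB^H(s)+\int_0^t v_2(x,t-s)g_2(s)\,ds .
\]
The plan is to pass the normal-derivative trace through both integrals. For fixed $t$, let $\mathcal{A}=\partial_\nu:H^2(D)\cap H_0^1(D)\to L^2(\partial D)$; it is bounded because $\partial D$ is $C^\infty$. Proposition \ref{thm_hiddenregularity} gives $v_1\in C^1([0,T];H^2\cap H_0^1)$ for $f_1\in H_0^1(D)$ and $v_2\in C([0,T];H^2\cap H^1_0)$ for $f_2\in L^2(D)$. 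Hence $\Psi(s):=v_1(\cdot,t-s)g_1(s)\mathbf 1_{[0,t]}(s)$ is a deterministic $H^2\cap H^1_0$-valued integrand. We check that it lies in $\mathcal{H}_0$ using the same three-case bounds as in the proof of Theorem \ref{thm_existence}, with $e^{A(t-s)}\Phi(s)$ replaced by $\Psi(s)$. Lemma \ref{lemma_exchange} then gives
\[
\partial_\nu u(t)=\int_0^t \partial_\nu v_1(\cdot,t-s)g_1(s)\,dB^H(s)+\int_0^t\partial_\nu v_2(\cdot,t-s)g_2(s)\,ds .
\]
For the deterministic term, commuting the trace with the Bochner integral is standard.

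Next we estimate the two pieces in $L^2(\Omega;L^2(\partial D))$. For the deterministic part, \eqref{es-m0} and the Sobolev embedding $H^1(0,T;L^2(\partial D))\subset C([0,T];L^2(\partial D))$ give $\sup_\tau\|\partial_\nu v_2(\tau)\|_{L^2(\partial D)}\le C\|f_2\|_{L^2}$. Cauchy--Schwarz then yields the bound $C\|f_2\|_{L^2}\|g_2\|_{L^2(0,T)}$. For the stochastic part, set $w(\tau):=\partial_\nu v_1(\cdot,\tau)$. By \eqref{es-m1}, $w\in H^2(0,T;L^2(\partial D))\subset C^1([0,T];L^2(\partial D))$, so
\[
\sup_\tau\|w(\tau)\|+\sup_\tau\|w'(\tau)\|\le C\|f_1\|_{H^1(D)} .
\]
We then treat the three cases of $H$:
\begin{itemize}
\item[(i)] $H=\tfrac12$: the It\^o isometry \eqref{eq:H=1/2} gives $\mathbb E\|\cdot\|^2=\int_0^t\|w(t-s)\|^2|g_1(s)|^2ds\le C\|f_1\|_{H^1}^2\|g_1\|_{L^2}^2$.
\item[(ii)] $H>\tfrac12$: use \eqref{eq:H>1/2}, bound the inner product by the product of norms, and apply Lemma \ref{le_inequality} to $|g_1|$, exactly as in Case 2 of Theorem \ref{thm_existence}. (As in the paper, this effectively uses $p=2>1/H$.)
\item[(iii)] $H<\tfrac12$: use \eqref{eq:H<1/2} and split into $I_1+I_{21}+I_{22}$ as before. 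The semigroup-difference estimate for $I_{21}$ is replaced by the Lipschitz bound $\|w(t-\tau)-w(t-s)\|\le C\|f_1\|_{H^1}|\tau-s|$, which comes from $w\in C^1$. The H\"older term $I_{22}$ uses $g_1\in C^{0,\rho}$ with $\rho>\tfrac12-H$, together with \eqref{eq_kernel_1}--\eqref{eq_kernel_4}. This produces the factor $\|g_1\|_{C^{0,\rho}}$, which controls $\|g_1\|_{L^\infty}$.
\end{itemize}
All constants are uniform in $t\in[0,T]$, since they involve only powers $t^{2H},t^{2H+2},t^{2H+2\rho}\le C_T$. Taking the supremum over $t$ gives \eqref{es-H>1/2} and \eqref{es-H<1/2}.

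The main obstacle is justifying the interchange of $\partial_\nu$ with the Wiener integral. The trace is not bounded on the state space $\mathcal H$ itself, so we must know that the integrand lies in $\mathcal H_0([0,t];H^2\cap H^1_0)$ and not merely in $\mathcal H_0([0,t];\mathcal H)$. The place I would expect real work is the case $H<\tfrac12$. There one needs $s\mapsto v_1(t-s)$ to be Lipschitz into $H^2\cap H_0^1$, and this is exactly what the $C^1$ statement of Proposition \ref{thm_hiddenregularity} for $f_1\in H^1_0$ supplies. Alternatively, one can first prove the formula for smooth $f_1$ and conclude by density, using the a priori bound just derived together with \eqref{es-m1}.
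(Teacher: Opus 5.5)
Your proposal is correct and follows essentially the same route as the paper. Both arguments use the representation of Theorem \ref{thmexplicit_form__mildsolution}, interchange $\partial_\nu$ with the Wiener and Bochner integrals via Lemma \ref{lemma_exchange}, and estimate the stochastic term in three cases, splitting the $H<\frac12$ case into a $K_H$ term, a H\"older-in-$g_1$ term and a Lipschitz-in-$\partial_\nu v_1$ term, before closing with \eqref{es-m0} and \eqref{es-m1}. The only differences are cosmetic: you bound the deterministic term through $\sup_\tau\|\partial_\nu v_2(\tau)\|_{L^2(\partial D)}$ rather than its $L^2$-in-time norm, and you spell out why $s\mapsto v_1(t-s)g_1(s)$ lies in $\mathcal{H}_0([0,t];H^2(D)\cap H_0^1(D))$, which the paper only asserts.
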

\begin{proof}
    In the following proof, $C$ denotes a general constant that may change value from line to line. 
    From Theorem \ref{thmexplicit_form__mildsolution}, the mild solution can be written as
    \begin{align*}
	   u(x,t) = \int_0^t v_1(x,t-s) g_1(s) dB^H(s)+\int_0^t v_2(x,t-s)g_2(s) ds,
    \end{align*}
    where $v_i$ is the solution of the equation \eqref{MGT_2}. \\
    By Proposition \ref{thm_hiddenregularity} and \cite[Chapter 5.9.2, Theorem 2]{evans2022partial}, we have $\partial_\nu v_1 \in H^2(0, T; L^2(\partial D)) \hookrightarrow C^{0,1}([0, T]; L^2(\partial D))$ and $\partial_\nu v_2 \in H^1(0, T; L^2(\partial D)) \hookrightarrow L^\infty(0, T; L^2(\partial D))$. Thus, both the map $s \mapsto v_1(\cdot, t - s)g_1(s)$ and its image $s \mapsto \partial_\nu v_2(\cdot, t - s)g_2(s)$ are Bochner integrable on $[0, t]$ with respect to their corresponding spaces. \\
    Similarly, the analogous regularity of $v_1$ and temporal smoothness of $g_1$ ensure the integrand $s \mapsto v_1(\cdot, t-s)g_1(s)$ belongs to the fractional Wiener space $\mathcal{H}_0([0,T]; H^2(D)\cap H_0^1(D))$. Since $\partial_{\nu}: H^2(D) \cap H_0^1(D)  \to H^{\frac{1}{2}}(\partial D)\hookrightarrow L^2(\partial D)$ is a bounded linear operator, by applying \cite[Theorem 3.7.12]{hille1996functional} and Lemma \ref{lemma_exchange}, we obtain
    \begin{equation*}
	   \partial_{\nu} u(x,t) = \int_0^t \partial_{\nu} v_1(x, t-s) g_1(s)\; dB^H(s)+\int_0^t \partial_{\nu} v_2(x, t-s) g_2(s)\;ds.
    \end{equation*}
    We proceed to estimate the two terms separately.

    For the deterministic term, applying H\"older's inequality yields:
    \begin{flalign}\label{es-deter}
        \left\| \int_0^t\partial_{\nu}v_2(t-s)g_2(s)\; ds \right\|_{L^2(\partial D)} 
        &\le C\int_0^t \left\|\partial_{\nu}v_2(t-s)g_2(s)\right\|_{L^2(\partial D)}\; ds\nonumber\\
        &\le C\left(\int_0^t\left\|\partial_{\nu} v_2(t-s)\right\|^2_{L^2(\partial D)}\; ds \int_0^t|g_2(s)|^2\; ds\right)^{\frac{1}{2}}\nonumber\\
        &\le C\|\partial_{\nu}v_2\|_{L^2(0,T;L^2(\partial D))} \|g_2\|_{L^2(0,T)}.
    \end{flalign}
    
    For the stochastic term, we divide the proof into three cases based on the value of the Hurst parameter H. \\
    \noindent \textbf{Case 1: $H = \frac{1}{2}$.} By Itô's isometry \eqref{eq:H=1/2},
    \begin{flalign}\label{es-case1}
        \mathbb E \left\| \int_0^t \partial_{\nu}v_1(t-s)g_1(s)\,dB^H(s) \right\|_{L^2(\partial D)}^2 &= \int_0^t \|\partial_{\nu}v_1(t-s)\|_{L^2(\partial D)}^2 |g_1(s)|^2\,ds\nonumber\\
        &\le C\|\partial_{\nu}v_1\|^2_{L^{\infty}(0,T;L^2(\partial D))} \|g_1\|^2_{L^2(0,T)}.
    \end{flalign}
    \noindent \textbf{Case 2: $H > \frac{1}{2}$.} By applying \eqref{eq:H>1/2} and Cauchy-Schwarz inequality, we obtain
    \begin{flalign}\label{es-case2}
        &\mathbb E \left\| \int_0^t \partial_{\nu}v_1(t-s)g_1(s)\,dB^H(s) \right\|_{L^2(\partial D)}^2 \nonumber\\
        &\le H(2H-1) \int_0^t\int_0^t  \|\partial_{\nu}v_1(t-s)\|_{L^2(\partial D)} \|\partial_{\nu}v_1(t-r)\|_{L^2(\partial D)} |g_1(s)||g_1(r)||s-r|^{2H-2}\,dsdr\nonumber\\
        &\le CH(2H-1)\|\partial_{\nu}v_1\|^2_{L^{\infty}\left(0, T;L^2(\partial D)\right)}\int_0^t\int_0^t  |g_1(s)||g_1(r)||s-r|^{2H-2}\,dsdr
        \nonumber\\
        &\le C_{H,T}\|\partial_{\nu}v_1\|^2_{L^{\infty}\left(0, T;L^2(\partial D)\right)}\|g_1\|^2_{L^2(0, T)},
    \end{flalign}
where the last step follows from Lemma \ref{le_inequality} for $g_1 \in L^2(0,T)$. \\
\noindent \textbf{Case 3: $H < \frac{1}{2}$.} By \eqref{eq:H<1/2} and the expression of $K_H^*$ given in \eqref{eq:K_H^*}, we have the following estimate
\begin{flalign*}
	&\mathbb{E}  \Big\| \int_0^t \partial_\nu v_1(t-s) g_1(s) d B^H(s) \Big\|_{L^2(\partial D)}^2 \Big] 
	= \int_0^t \big\| \left(K_H^*(t) \partial_\nu v_1(t-\cdot) g_1(\cdot)\right) (s) \big\|_{L^2(\partial D)}^2 ds \\
	&\le C \int_0^t \| K_H(t,s) \partial_\nu v_1(t-s) g_1(s) \|_{L^2(\partial D)}^2 ds \\
	&\quad + \int_0^t \left( \int_s^t \| \partial_\nu v_1(t-\tau)g_1(\tau) - \partial_\nu v_1(t-s) g_1(s) \|_{L^2(\partial D)} \Big| \frac{\partial}{\partial \tau} K_H(\tau, s) \Big| \;d\tau\right)^2 \;ds \\
	&=: J_1 + J_2.
\end{flalign*}
Similarly to \eqref{eq:es-I1}, the first term $J_1$ above can be estimated as follows:
\begin{align}\label{eq:es-J1}
J_1 &= \int_0^t \left\| K_H(t,s) \partial_\nu v_1(t-s) g_1(s) \right\|_{L^2(\partial D)}^2 ds \nonumber\\
&\le \left\| \partial_\nu v_1 \right\|_{L^\infty(0,T; L^2(\partial D))}^2 \left\| g_1 \right\|_{L^\infty(0,T)}^2 \int_0^t \big(K_H(t,s)\big)^2 ds \nonumber\\
&= C t^{2H} \left\| \partial_\nu v_1 \right\|_{L^\infty(0,T; L^2(\partial D))}^2 \left\| g_1 \right\|_{L^\infty(0,T)}^2.
\end{align}
Then using inequality \eqref{eq_kernel_4} for the partial derivative of the kernel, we split $J_2$ into two terms $J_2 \le C(J_{21} + J_{22})$, where
\begin{align*}
J_{21} &= \int_0^t \left( \int_s^t (\tau - s)^{H-\frac{3}{2}} \left\| \partial_\nu v_1(t-\tau) \right\|_{L^2(\partial D)} \left| g_1(\tau) - g_1(s) \right| \,d\tau \right)^2 ds, \\
J_{22} &= \int_0^t \left( \int_s^t (\tau - s)^{H-\frac{3}{2}} \left| g_1(s) \right| \left\| \partial_\nu v_1(t-\tau) - \partial_\nu v_1(t-s) \right\|_{L^2(\partial D)} \,d\tau \right)^2 ds.
\end{align*}
Since $g_1 \in C^{0,\rho}([0,T])$ with $\rho > \frac{1}{2} - H$, for $J_{21}$ we obtain:
\begin{align}\label{eq:es_J21}
J_{21} &\le C \left\| \partial_\nu v_1 \right\|_{L^\infty(0,T; L^2(\partial D))}^2 \left\| g_1 \right\|_{C^{0,\rho}([0,T])}^2 \int_0^t \left( \int_s^t (\tau - s)^{H-\frac{3}{2}+\rho} \,d\tau \right)^2 ds \nonumber\\
&\le C \left\| \partial_\nu v_1 \right\|_{L^\infty(0,T; L^2(\partial D))}^2 \left\| g_1 \right\|_{C^{0,\rho}([0,T])}^2 t^{2H+2\rho}.
\end{align}
For $J_{22}$, since $\partial_\nu v_1$ is Lipschitz continuous in time, for any $0 \le s \le t \le T$, we have
\begin{equation*}
    \left\| \partial_\nu v_1(t) - \partial_\nu v_1(s) \right\|_{L^2(\partial D)} 
\le C(t-s) \left\| \partial_\nu v_1 \right\|_{H^2(0,T; L^2(\partial D))}.
\end{equation*}
It follows that
\begin{align}\label{eq:es-J22}
J_{22} &\le C \left\| g_1 \right\|_{L^\infty(0,T)}^2 \int_0^t \left( \int_s^t (\tau - s)^{H-\frac{3}{2}} (\tau - s) \left\| \partial_\nu v_1 \right\|_{H^2(0,T; L^2(\partial D))} \,d\tau \right)^2 ds \nonumber\\
&\le C \left\| g_1 \right\|_{L^\infty(0,T)}^2 \left\| \partial_\nu v_1 \right\|_{H^2(0,T; L^2(\partial D))}^2 \int_0^t \left( \int_s^t (\tau - s)^{H-\frac{1}{2}} \,d\tau \right)^2 ds \nonumber\\
&\le C \left\| g_1 \right\|_{L^\infty(0,T)}^2 \left\| \partial_\nu v_1 \right\|_{H^2(0,T; L^2(\partial D))}^2 t^{2H+2}.
\end{align}
Combining the estimates \eqref{eq:es-J1}, \eqref{eq:es_J21} and \eqref{eq:es-J22}, we obtain:
\begin{equation}\label{es-case3}
    \mathbb{E}  \Big\| \int_0^t \partial_\nu v_1(t-s) g_1(s) d B^H(s) \Big\|_{L^2(\partial D)}^2   \le C t^{2H} \left\| \partial_\nu v_1 \right\|_{H^2(0,T; L^2(\partial D))}^2 \left\| g_1 \right\|_{C^{0,\rho}([0,T])}^2.
\end{equation}

Taking the supremum over $t \in [0,T]$ for the estimates \eqref{es-deter}, \eqref{es-case1}, \eqref{es-case2} and \eqref{es-case3}, and applying the bounds \eqref{es-m0} and \eqref{es-m1}, the final results follow immediately. 
\end{proof}

\section{Uniqueness of inverse random source problem}
\subsection{Uniqueness of recovering \texorpdfstring{$f_i(x)$}{fi(x)}}
In this section, we prove the uniqueness of recovering $f_i(x)$ when $H \in (0, 1)$ assuming the time function $g_i(t)$ are already known, which is inspired by \cite{Lassas_2023}. The proof of uniquely recovering $f_i$ is mainly based on the unique continuation property of Moore-Gibson-Thompson equation when the initial condition $v(x, 0)=v_t(x, 0)=0$ is satisfied. Before proving the results, we introduce the unique continuation property of MGT equation using Carleman estimate.

\begin{lemma}[Carleman estimate for MGT equation]\cite[Theorem 2.6]{article_Arancibia_2022} \label{le_Carleman_for_wave}
    Assume the time condition $T > \sup_{x \in D} |x - x_0|$ and the geometric condition 
    \begin{equation}
        \exists x_0 \notin \overline{D}, \text{ such that } \Gamma \supset \{x \in \partial D, \ (x - x_0) \cdot \nu(x) \geq 0\}, \label{geometric}
    \end{equation}
    are satisfied. Let $\beta \in (0,1)$ be such that 
    \begin{equation}
	   \sup_{x \in D} |x_0 - x| < \beta T.
    \end{equation}
    Let $\varphi_{\lambda} = e^{\lambda \psi(x,t)}$, where
    \begin{equation}
	   \psi(x,t) = |x - x_0|^2 - \beta t^2 + C_0,
    \end{equation}
    for some constant $C_0$ such that $\psi \ge 1$.
    Then, there exists $s_0 > 0, \lambda > 0$ and a positive constant $C$ such that
    \begin{equation}
    \begin{aligned}
        &\sqrt{s} \int_{D} e^{2s\varphi_{\lambda}(\cdot, 0)} |y_{tt}(\cdot, 0)|^2 dx + s\lambda c^4 \int_0^T \int_{D} e^{2s\varphi_{\lambda}} \varphi_{\lambda}(|y_t|^2 + |\nabla y|^2)dxdt \\
        &+ s^3\lambda^3 c^4 \int_0^T \int_{D} e^{2s\varphi_{\lambda}} \varphi_{\lambda}^3|y|^2 dxdt + s\lambda \int_0^T \int_{D} e^{2s\varphi_{\lambda}} \varphi_{\lambda}(|y_{tt}|^2 + |\nabla y_t|^2)dxdt \\
        &+ s^3\lambda^3 \int_0^T \int_{D} e^{2s\varphi_{\lambda}} \varphi_{\lambda}^3|y_t|^2 dxdt\\
        &\le C \int_0^T \int_{D} e^{2s\varphi_{\lambda}}|Ly|^2 dxdt + Cs\lambda \int_0^T \int_{\Gamma} e^{2s\varphi_{\lambda}} (|\nabla y_t \cdot \nu|^2 + c^4|\nabla y \cdot \nu|^2) d\sigma dt,
    \end{aligned}
    \end{equation}
    for all $s \ge s_0$ and for all $y \in L^2(0, T; H_0^1(D))$ satisfying $Ly := y_{ttt} + \alpha y_{tt} - c^2\Delta y - \Delta y_t \in L^2(D \times (0, T))$, $y(\cdot, 0) = y_t(\cdot, 0) = 0$ in $D$, and $y_{tt}(\cdot, 0) \in L^2(D)$.
\end{lemma}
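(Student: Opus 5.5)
The plan is to reduce the third-order operator $L$ to a second-order wave operator coupled with a first-order ODE in time, prove a Carleman estimate for each piece with the same weight $e^{s\varphi_\lambda}$, and then absorb the coupling terms using large $s$ and $\lambda$. Set $w := y_t + c^2 y$. A direct computation gives
\begin{equation*}
w_{tt} - \Delta w = y_{ttt} + c^2 y_{tt} - \Delta y_t - c^2 \Delta y = Ly - (\alpha - c^2)\, y_{tt}.
\end{equation*}
We also have $w = 0$ on $\partial D$, $w(\cdot,0) = 0$ and $w_t(\cdot,0) = y_{tt}(\cdot,0)$. So $w$ solves a wave equation with zero Dirichlet data, vanishing initial displacement and a lower-order source. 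The function $y$ is recovered from $w$ through the ODE $y_t + c^2 y = w$ with $y(\cdot,0) = 0$.

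\textbf{Step 1: wave estimate.} I would first invoke the standard Carleman estimate for the wave operator $\partial_t^2 - \Delta$ on $D \times (0,T)$ with weight $\varphi_\lambda = e^{\lambda\psi}$ and $\psi = |x-x_0|^2 - \beta t^2 + C_0$ (Imanuvilov / Baudouin--de Buhan--Ervedoza type, on the half interval $(0,T)$). The hypothesis $\sup_D |x-x_0| < \beta T$ makes $e^{s\varphi_\lambda}$ exponentially small near $t = T$, so the terms at $t = T$ are negligible after a cutoff. The geometric condition \eqref{geometric} ensures that the boundary terms on $\partial D \setminus \Gamma$ have the favourable sign, so only the observation $\int_0^T\int_\Gamma e^{2s\varphi_\lambda}|\partial_\nu w|^2$ remains on the right.

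\textbf{Step 2: the $t=0$ term (main obstacle).} At $t = 0$ we have $\psi_t = 0$ and $w = 0$, so the only surviving term in the energy identity comes from $w_t(0) = y_{tt}(0)$. I would redo the multiplier computation for $P = e^{s\varphi}\,(\partial_t^2 - \Delta)\, e^{-s\varphi}$ carefully, tracking the time-boundary contributions. The aim is to show that this term has a sign producing $\sqrt{s}\int_D e^{2s\varphi_\lambda(\cdot,0)}|y_{tt}(\cdot,0)|^2$ on the left-hand side, as in the Carleman-based stability proofs for inverse source problems. This is the step I expect to be the real difficulty.

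\textbf{Step 3: ODE estimate and absorption.} Next I would prove a weighted estimate for $y_t + c^2 y = w$ with $y(0) = 0$. Multiplying by $e^{2s\varphi_\lambda}\varphi_\lambda^k$ and integrating in time, the sign $\psi_t = -2\beta t \le 0$ together with $y(0) = 0$ should give
\begin{equation*}
s^2\lambda^2 \iint e^{2s\varphi_\lambda}\varphi_\lambda^3 |y|^2 \le C \iint e^{2s\varphi_\lambda}\varphi_\lambda |w|^2 .
\end{equation*}
Applying the same argument to the gradient $\nabla y$ gives the analogous bound in terms of $\nabla w$. This ODE gain in powers of $s\lambda$ is what allows the final absorption.

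Now write $y_t = w - c^2 y$ and $y_{tt} = w_t - c^2 y_t$, and use these to bound the left-hand side terms of the Lemma by the weighted $w$-quantities from Step 1. The source term $(\alpha - c^2)y_{tt}$ is bounded by $|w_t| + |y_t|$, which for $s \ge s_0$ is absorbed by the $s\lambda$- and $s^3\lambda^3$-weighted terms. Finally, $\partial_\nu w = \partial_\nu y_t + c^2\partial_\nu y$ on $\Gamma$, which yields exactly the stated boundary observation $|\nabla y_t\cdot\nu|^2 + c^4|\nabla y\cdot\nu|^2$. A density argument extends the estimate from smooth $y$ to the stated class $y \in L^2(0,T;H^1_0(D))$ with $Ly \in L^2$.
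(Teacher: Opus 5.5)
The paper gives no proof of this lemma; it imports it from \cite[Theorem 2.6]{article_Arancibia_2022}. Your reduction to $w=y_t+c^2y$, followed by a wave Carleman estimate with an initial-time term, is the right architecture: the $c^4$ weights and the boundary term $|\nabla y_t\cdot\nu|^2+c^4|\nabla y\cdot\nu|^2$ in the statement are exactly what it produces. However, two steps would not go through as you describe them.

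\textbf{Step 3: the claimed ODE gain is false.} The weighted bound with gain $s^2\lambda^2$ fails, and your own multiplier computation shows why. The term $-\tfrac12\partial_t\bigl(e^{2s\varphi_\lambda}\varphi_\lambda^k\bigr)=\beta t\lambda(2s\varphi_\lambda+k)\varphi_\lambda^k e^{2s\varphi_\lambda}$ carries the factor $t$. So nothing is gained near $t=0$, where $e^{2s\varphi_\lambda}$ is essentially a Gaussian in $t$ of width $\sim(s\lambda)^{-1/2}$. Concretely, take $w=\chi(x)\mathbf{1}_{(0,\delta)}(t)$ with $\chi\ge0$, $\chi\not\equiv0$, and $\delta=s^{-1/2}$. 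On $(\delta,2\delta)$ one has $y\gtrsim\chi\delta$ and $e^{2s\varphi_\lambda}\gtrsim e^{2s\varphi_\lambda(\cdot,0)}$, uniformly in $s$. Hence your left side is $\gtrsim s^2\lambda^2\delta^3$ and your right side is $\lesssim\delta$, both times $\int_D e^{2s\varphi_\lambda(\cdot,0)}\chi^2\,dx$. The ratio grows like $s$.

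No gain is needed, though, and your remark that the ODE gain drives the absorption misidentifies the mechanism. Let $\rho=e^{2s\varphi_\lambda}\varphi_\lambda^k$, which is nonincreasing in $t$. Since $y(\cdot,0)=0$,
\[
\iint\rho\,|y_t+c^2y|^2=\iint\rho\bigl(|y_t|^2+c^4|y|^2\bigr)+c^2\int_D\rho(\cdot,T)\,|y(\cdot,T)|^2-c^2\iint(\partial_t\rho)\,|y|^2\ \ge\ \iint\rho\bigl(|y_t|^2+c^4|y|^2\bigr).
\]
The same holds with $y$ replaced by $y_t$ (for $w_t$, using $y_t(\cdot,0)=0$) and by $\nabla y$ (for $\nabla w$). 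These three inequalities convert the left side of the wave estimate for $w$ into exactly the left side of the lemma. The only absorption needed is
\[
(\alpha-c^2)^2\iint e^{2s\varphi_\lambda}|y_{tt}|^2\le\frac{(\alpha-c^2)^2}{s\lambda}\,s\lambda\iint e^{2s\varphi_\lambda}\varphi_\lambda|y_{tt}|^2,
\]
using $\varphi_\lambda\ge1$. This runs on the Carleman powers of $s\lambda$, not on any ODE gain.

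\textbf{Steps 1--2: the $t=0$ and $t=T$ terms.} Tracking the time-boundary terms of the conjugated identity cannot produce the $\sqrt{s}$ term. With $W=e^{s\varphi_\lambda}w$ you have $W(\cdot,0)=\nabla W(\cdot,0)=0$, and $|W_t(\cdot,0)|^2$ appears only multiplied by $\partial_t\varphi_\lambda(\cdot,0)=0$. So every contribution at $t=0$ vanishes, contrary to your claim that the $w_t(0)$ term survives. A half-integer power of $s$ is in any case the signature of a Young-inequality step, not of an exact identity.

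The initial term comes from a separate energy argument:
\begin{enumerate}[(i)]
\item integrate $\partial_t\bigl(\theta(t)\int_D|W_t|^2\bigr)$ with $\theta(0)=1$, $\theta(T)=0$;
\item substitute $W_{tt}$ through the symmetric part $P_1W$ and integrate by parts;
\item apply Young's inequality with weights $s^{\pm1/2}$, together with the bound on $\|P_1W\|^2$ supplied by the Carleman identity.
\end{enumerate}

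Similarly, the terms at $t=T$ are not disposed of by a cutoff. A cutoff would create commutator terms near $t=T$ that the stated right-hand side does not obviously control. Instead, the hypothesis $\beta T>\sup_D|x-x_0|$ makes these terms sign-definite, since at $t=T$ one has $|\partial_t\psi|=2\beta T>2|x-x_0|=|\nabla\psi|$.

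The economical route is to quote the known wave estimate on $(0,T)$ with initial-time term for $w(\cdot,0)=0$ (Baudouin--de Buhan--Ervedoza). Apply it to $w=y_t+c^2y$ with source $Ly-(\alpha-c^2)y_{tt}$; together with the corrected Step 3, this is the whole proof.
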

\begin{theorem}\label{thm_UCP}
    If $\Gamma$ satisfies the geometric condition \eqref{geometric}, $T > \frac{1}{\sqrt{b}}\sup_{x \in D} |x - x_0|$ and $v$ is a solution of 
    \begin{equation}
	\begin{cases}
		\partial^3_{t} v + \alpha \partial^2_t v - c^2 \Delta v - b \Delta \partial_t v = 0  & D \times (0,T), \\
		v = 0 \quad \partial_{\nu}v=0& \Gamma \times (0,T), \\
		v(x,0) = 0, \quad \partial_t v(x,0) = 0, \quad \partial_t^2 v(x,0) = \phi(x) & D. \label{eq_determinsitic_MGT_for_control}
	\end{cases} 
\end{equation}
where $\phi \in L^2(D)$, then $\phi(x)=0$ a.e. in $D$.
\end{theorem}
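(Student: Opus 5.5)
The plan is to reduce Theorem \ref{thm_UCP} to the Carleman estimate of Lemma \ref{le_Carleman_for_wave} by a time rescaling, and then let the large parameter $s\to\infty$ kill the initial datum.

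\textbf{Step 1: rescale time.} The lemma is stated for the operator $Ly=y_{ttt}+\alpha y_{tt}-c^2\Delta y-\Delta y_t$, which has coefficient $1$ in front of $\Delta y_t$, whereas our equation has $b\Delta\partial_t v$. Set $\tau=\sqrt{b}\,t$ and $y(x,\tau)=v(x,\tau/\sqrt b)$. Then $v_t=\sqrt b\,y_\tau$, $v_{tt}=b\,y_{\tau\tau}$ and $v_{ttt}=b^{3/2}y_{\tau\tau\tau}$. Dividing the equation by $b^{3/2}$ gives
\[
y_{\tau\tau\tau}+\tilde\alpha y_{\tau\tau}-\tilde c^2\Delta y-\Delta y_\tau=0
\quad\text{on } D\times(0,\sqrt b\,T),
\]
with $\tilde\alpha=\alpha/\sqrt b$ and $\tilde c^2=c^2/b^{3/2}$. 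The new time horizon $\tilde T=\sqrt b\,T$ satisfies $\tilde T>\sup_{x\in D}|x-x_0|$, which is precisely the hypothesis $T>\frac1{\sqrt b}\sup|x-x_0|$. The data become $y(\cdot,0)=y_\tau(\cdot,0)=0$, $y_{\tau\tau}(\cdot,0)=\phi/b\in L^2(D)$, and $y=\partial_\nu y=0$ on $\Gamma\times(0,\tilde T)$. Hence $\nabla y_\tau\cdot\nu=0$ there as well, since differentiating the vanishing flux in time is legitimate by the hidden regularity of Proposition \ref{thm_hiddenregularity}. I also need $y\in L^2(0,\tilde T;H_0^1(D))$ with $Ly=0\in L^2$; this follows from well-posedness for $\phi\in L^2(D)$. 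Where the regularity is insufficient for the boundary traces, I would first argue with smoothed data, or note that the lemma holds by density.

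\textbf{Step 2: apply the Carleman estimate.} Lemma \ref{le_Carleman_for_wave}, applied on $(0,\tilde T)$ with the constants $\tilde\alpha,\tilde c$, has a vanishing right-hand side, since $Ly=0$ and both boundary fluxes on $\Gamma$ are zero. Every term on the left is nonnegative, so for all $s\ge s_0$
\[
\sqrt s\int_D e^{2s\varphi_\lambda(\cdot,0)}|y_{\tau\tau}(\cdot,0)|^2\,dx\le 0.
\]
Since $e^{2s\varphi_\lambda}>0$, this forces $y_{\tau\tau}(\cdot,0)=0$ a.e., and therefore $\phi=0$ a.e. in $D$. Alternatively, the other nonnegative terms give $y\equiv0$, which again yields $\phi=0$.

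\textbf{Main obstacle.} The delicate point is justifying that the solution lies in the class where Lemma \ref{le_Carleman_for_wave} applies, including the traces $\nabla y_t\cdot\nu$ on $\Gamma$. I also need to check that the lemma's constants really allow an arbitrary positive $\tilde c^2$ and $\tilde\alpha$ after the rescaling. For $\phi\in L^2(D)$, Proposition \ref{thm_hiddenregularity} gives $\partial_\nu v\in H^1(0,T;L^2(\partial D))$. Thus $\partial_\nu v_t$ is in $L^2$ and vanishes on $\Gamma$ because $\partial_\nu v$ does, and this suffices for the boundary integral to be defined and equal to zero.
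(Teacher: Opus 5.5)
Your proposal is correct and is essentially the paper's proof. It uses the same rescaling $\tau=\sqrt b\,t$, giving $\tilde\alpha=\alpha/\sqrt b$, $\tilde c^2=c^2/b^{3/2}$, $\tilde T=\sqrt b\,T>\sup_{x\in D}|x-x_0|$ and $y_{\tau\tau}(\cdot,0)=\phi/b$. It uses the same hidden regularity $\partial_\nu v\in H^1(0,T;L^2(\partial D))$ to obtain $\partial_\nu y_\tau=0$ on $\Gamma\times(0,\tilde T)$, and it reads off $\phi=0$ from the first term of Lemma \ref{le_Carleman_for_wave} once the right-hand side vanishes, exactly as the paper does.

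Your fallback of smoothing the data would not work, because mollifying $\phi$ destroys the hypothesis $\partial_\nu v=0$ on $\Gamma$. It is not needed, though, since your hidden-regularity argument already puts the boundary traces in $L^2(\Gamma\times(0,\tilde T))$.
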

\begin{proof}
Since $\phi(x) \in L^2(D)$, the regularity of the
solution of the MGT equation tells that $v \in C^1\left([0, T]; H_0^1(D) \right)\cap C^2\left([0, T]; L^2(D)\right)$\cite[Theorem 1.1]{Bucci2020}. Let $\tau=\sqrt{b}t$, $\tilde{v}(x, \tau)=v\left(x, t\right)$. Let $T_1=\sqrt{b}T$, then $T_1>\sup_{x \in D} |x - x_0|$ and $\tilde{v}(x, \tau)$ satisfies
    \begin{equation}
	   \begin{cases}
	       	\partial^3_{\tau} \tilde{v} + \frac{\alpha}{\sqrt{b}} \partial^2_{\tau} \tilde{v} - \frac{c^2}{\sqrt{b^3}} \Delta \tilde{v} - \Delta \partial_{\tau} \tilde{v} = 0  & D \times (0,T_1), \\
		    \tilde{v} = 0 \quad \partial_{\nu} \tilde{v}=0& \Gamma \times (0,T_1), \\
		      \tilde{v}(x,0) = 0, \quad \tilde{v}_{\tau}(x,0) =0 \quad \tilde{v}_{\tau \tau}(x, 0)=\frac{1}{b}\phi(x)  & D.
	   \end{cases}
    \end{equation}
    Since $\partial_{\nu} \tilde{v}=0$ in $H^1\left((0, T_1);L^2(\Gamma)\right)$ and $\partial_{\tau}(\partial_{\nu} \tilde{v})=\partial_{\nu}\tilde{v}_{\tau}$, we obtain $\partial_{\nu} \tilde{v}_{\tau}=0$ in $L^2\left((0, T_1);L^2(\Gamma)\right)$. Applying Lemma \ref{le_Carleman_for_wave} for $\tilde{v}$, whose interior and boundary terms vanish, yields 
    \begin{flalign*}
        &\sqrt{s} \int_{D} e^{2s\varphi_{\lambda}(\cdot, 0)} |\tilde{v}_{\tau \tau}(\cdot, 0)|^2 dx=\frac{\sqrt{s}}{b^2} \int_{D} e^{2s\varphi_{\lambda}(\cdot, 0)} |\phi(\cdot)|^2 dx\\
        &\le C \int_0^{T_1} \int_{D} e^{2s\varphi_{\lambda}}|L\tilde{v}|^2 dxd\tau + Cs\lambda \int_0^{T_1} \int_{\Gamma} e^{2s\varphi_{\lambda}} (|\nabla \tilde{v}_{\tau} \cdot \nu|^2 + \frac{c^4}{b^3}|\nabla \tilde{v} \cdot \nu|^2) d\sigma d\tau=0
    \end{flalign*}
    Therefore, $\phi(x)=0$ a.e. in $D$.   
\end{proof}
We now turn to the proof of Theorem \ref{thm_unique-f}.
\begin{proof}
    By the proof of \autoref{thm_hidden_regularity}, for an arbitrary test function $\eta \in C^{\infty}_c(\Gamma)$, we have
    \begin{equation*}
	   \langle\partial_{\nu} u(t) ,\eta\rangle_{L^2(\Gamma)} = \int_0^t \langle\partial_{\nu} v_1(t-s),\eta\rangle_{L^2(\Gamma)} g_1(s)\; dB^H(s)+\int_0^t \langle\partial_{\nu} v_2(t-s),\eta\rangle_{L^2(\Gamma)} g_2(s)\;ds.
    \end{equation*}
    First taking the expectation yields
    \begin{equation} \label{expectation}
        \mathbb{E}\left[ \langle\partial_{\nu} u(t) ,\eta\rangle_{L^2(\Gamma)}\right]=\int_0^t \langle\partial_{\nu} v_2(t-s),\eta\rangle_{L^2(\Gamma)} g_2(s)\;ds =0 \text{ in }  L^2 \left( 0, T\right).
    \end{equation}
    Using the change of variables $s \mapsto t-s$ in \eqref{expectation} and differentiating with respect to time $t$, we obtain
    \begin{flalign*}
        \langle\partial_{\nu} v_2(t), \eta\rangle_{L^2(\Gamma)}&=-\frac{1}{g_2(0)} \int_0^t \langle\partial_{\nu} v_2(s), \eta\rangle_{L^2(\Gamma)}\frac{\partial}{\partial t}g_2(t-s)\;ds.
    \end{flalign*}
    Taking the absolute value on both sides yields
    \[
    \left| \langle \partial_\nu v_2(t), \eta \rangle_{L^2(\Gamma)} \right|
    \le \frac{1}{|g_2(0)|} \int_0^t \left| \langle \partial_\nu v_2(s), \eta \rangle_{L^2(\Gamma)} \right| \left| \frac{\partial}{\partial t} g_2(t-s) \right| ds.
    \]
    Since $g_2 \in C^1([0,T])$, applying Gronwall's inequality, we obtain $\langle \partial_\nu v_2(t), \eta \rangle_{L^2(\Gamma)} = 0$. Since $\eta \in C^\infty_c(\Gamma)$ is arbitrary and $C^\infty_c(\Gamma)$ is dense in $L^2(\Gamma)$, we derive $\partial_\nu v_2(x,t) = 0$ in $L^2((0,T); L^2(\Gamma))$.Thus, $v_2$ satisfies the MGT equation \eqref{eq_determinsitic_MGT_for_control} with $\phi = f_2 \in L^2(D)$. By Theorem \ref{thm_UCP}, we conclude that $f_2(x) = 0$ a.e. in $D$.

    Second, to prove $f_{1}(x) = 0$ a.e. in $D$, we divide the proof into three cases based on the value of the Hurst parameter H. \\
    \noindent \textbf{Case 1: $H = \frac{1}{2}$.} According to Itô's isometry \eqref{eq:H=1/2}, taking the variance yields
    \begin{equation}\label{eq:var=1/2}
        \operatorname{Var}\left[ \langle\partial_{\nu} u(t),\eta\rangle_{L^2(\Gamma)} \right]=\int_0^t (\langle\partial_{\nu} v_1(t-s),\eta\rangle_{L^2(\Gamma)})^2 g^2_1(s)\;ds =0 \text{ in }  L^2 \left(0, T\right).
    \end{equation}
    Using the same change of variables and differentiating with respect to $t$, we obtain
    \begin{align*}
	   \langle\partial_{\nu} v_1(t),\eta\rangle_{L^2(\Gamma)} ^2 &= - \frac{1}{g_1^2(0)} \int_0^t \langle\partial_{\nu} v_1(s),\eta\rangle_{L^2(\Gamma)} ^2 \frac{\partial}{\partial t} g_1^2(t-s)\;ds \\
	   &\le \frac{1}{g_1^2(0)} \int_0^t \langle\partial_{\nu} v_1(s),\eta\rangle_{L^2(\Gamma)} ^2 \left| \frac{\partial}{\partial t} g_1^2(t-s) \right| \;ds.
    \end{align*}
    Applying Gronwall's inequality as above, we obtain $\langle \partial_\nu v_1(t), \eta \rangle_{L^2(\Gamma)} = 0.$ Consequently, $\partial_\nu v_1(x,t) = 0$ in $L^2((0,T); L^2(\Gamma))$. Invoking Theorem \ref{thm_UCP} again, we conclude that $f_1(x) = 0$ a.e. in $D$. \\ 
    \noindent \textbf{Case 2: $H > \frac{1}{2}$.} Applying \eqref{eq:H>1/2}, taking the variance yields
    \begin{flalign}\label{eq:var>1/2}
	   &\operatorname{Var}\left[ \langle\partial_{\nu}u(t),\eta\rangle_{L^2(\Gamma)} \right] \nonumber\\
       = &\int_0^t \int_0^t \langle\partial_\nu v_1( t-s),\eta\rangle_{L^2(\Gamma)} g_1(s) \langle\partial_\nu v_1( t-r),\eta\rangle_{L^2(\Gamma)} \phi(r-s) \, dr ds=0\text{ in }  L^2 \left(0, T\right).
    \end{flalign}
    For a fixed $t$, let $\langle\partial_\nu v_1( t-s),\eta\rangle_{L^2(\Gamma)}g_1(s) = \psi_t(s)$. Recalling $\phi(r-s) = H(2H-1)|r-s|^{2H-2}$, equation \eqref{eq:var>1/2} becomes
    \begin{equation}
	   \int_0^t \int_0^t \psi_t(s) \psi_t(r) |r-s|^{2H-2} \, ds dr = 0.	\label{eq_variance}
    \end{equation}
    According to \cite[chapter 2.1]{biagini2008stochastic}, we have the identity:
    \begin{flalign} 
	   |r-s|^{2H-2} = \frac{(rs)^{H-\frac{1}{2}}}{\beta(2-2H, H-\frac{1}{2})} \int_0^{r \wedge s} v^{1-2H} (r-v)^{H-\frac{3}{2}} (s-v)^{H-\frac{3}{2}} \, dv	\label{eq_kernel_for_phi}.
    \end{flalign}
    Substituting \eqref{eq_kernel_for_phi} into \eqref{eq_variance} yields
    \begin{flalign*}
        &\int_0^t \int_0^t \int_0^{r \wedge s} \psi_t(s) \psi_t(r) v^{1-2H} (sr)^{H-\frac{1}{2}} (r-v)^{H-\frac{3}{2}} (s-v)^{H-\frac{3}{2}} \, dv ds dr \nonumber\\
        &= \int_0^t \left( \int_v^t \psi_t(s) s^{H-\frac{1}{2}} (s-v)^{H-\frac{3}{2}} \, ds \right)^2 v^{1-2H} \, dv = 0.
    \end{flalign*}
    This implies that for almost every $v \in (0,t)$:
    \begin{flalign}
       \int_{v}^{t}\psi_t(s)s^{H-\frac{1}{2}}(s-v)^{H-\frac{3}{2}}ds = 0.\label{eq_temp}
    \end{flalign}
    Defining $\tilde{\psi}_t(s) =: \psi_t(s) s^{H-\frac{1}{2}}$, equation \eqref{eq_temp} can be rewritten using the Riemann-Liouville fractional integral \cite[Definition 2.1]{jin2021fractional} as
    \begin{flalign}\label{eq:R-L.integral}
        ({}_{v}I_{t}^{H-\frac{1}{2}}\tilde{\psi}_t)(v) = 0 \quad \text{for a.e. } v \in (0,t),
    \end{flalign}
    where $\left({}_v I_t^{H-\frac{1}{2}}h\right)(v)=\frac{1}{\Gamma(H-\frac{1}{2})}\int_v^t (s-v)^{H-\frac{1}{2}-1}h(s)\; ds$. Note that for $H>\frac{1}{2}$, $\int_{0}^{T}|\tilde{\psi}_t(s)|ds = \int_{0}^{T}|\psi_t(s)|s^{H-\frac{1}{2}}ds \le C (\int_{0}^{T}|\psi_t(s)|^{2}ds)^{\frac{1}{2}} < \infty$, which ensures $\tilde{\psi}_t \in L^{1}(0,t)$. Using the semigroup property of fractional integrals ${}_{a}I_{x}^{\alpha}{}_{a}I_{x}^{\beta}f = {}_{a}I_{x}^{\alpha+\beta}f$ (see \cite[Theorem 2.1]{jin2021fractional}), and noting $H+1 > 0$, we apply ${}_{v}I_{t}^{\frac{3}{2}-H}$ to both sides of \eqref{eq:R-L.integral} to obtain:
    \begin{equation*}
        {}_{v}I_{t}^{\frac{3}{2}-H}{}_{v}I_{t}^{H-\frac{1}{2}}\tilde{\psi}_t(v) = {}_{v}I_{t}^{1}\tilde{\psi}_t(v) = \int_{v}^{t}\tilde{\psi}_t(s)ds = 0 \quad \text{for a.e. } v \in (0,t).
    \end{equation*}
    Differentiating with respect to $v$ yields:
    \begin{equation*}
        \tilde{\psi}_t(v) = -\partial_{v}\left(\int_{v}^{t}\tilde{\psi}_t(s)ds\right) = 0 \quad \text{for a.e. } v \in (0,t).
    \end{equation*}
    Consequently, $\psi_t(s) = 0$ for almost all $s \in (0,t)$. This means $\langle\partial_{\nu}v_{1}(t-s),\eta\rangle_{L^{2}(\Gamma)}g_{1}(s) = 0$ almost everywhere. Integrating this over $(0,t)$ we get
    \begin{equation}
        \int_{0}^{t}\langle\partial_{\nu}v_{1}(t-s),\eta\rangle_{L^{2}(\Gamma)}g_{1}(s)ds = 0.\label{eq:v1-integral}
    \end{equation}
    Applying the exact same arguments to \eqref{eq:v1-integral} as those used for \eqref{expectation}, we deduce $\partial_\nu v_1(x,t) = 0$, which by Theorem \ref{thm_UCP} immediately implies $f_1(x) = 0$ a.e. in $D$.\\
    \noindent \textbf{Case 3: $H < \frac{1}{2}$.} Applying \eqref{eq:H<1/2}, taking the variance yields
    \begin{equation*}
    \operatorname{Var}[\langle\partial_\nu u(t), \eta\rangle_{L^2(\Gamma)}] = \int_0^t \left( K_H^*(t, \cdot) \left[ \langle\partial_\nu v_1( t-\cdot),\eta\rangle_{L^2(\Gamma)} g_1(\cdot) \right] \right)^2 (s) \, ds = 0.
    \end{equation*}
    According to \cite[Section 5.1.2]{nualart2006malliavin}, the operator $K_H^*(t)$ associated with the fractional Wiener integral satisfies
    \begin{equation*}
    \left[ K_H^*(t) \varphi(\cdot) \right] (s) = d_H s^{\frac{1}{2}-H} D_{t-}^{\frac{1}{2}-H} \left( u^{H-\frac{1}{2}} \varphi(u) \right)(s).
    \end{equation*}
    where $d_H = c_H \Gamma\left(H + \frac{1}{2}\right)$ and
    \begin{flalign*}
        D_{t-}^{\frac{1}{2}-H}\varphi(s)=\frac{1}{\Gamma(H+1/2)}\left(\frac{\varphi(s)}{\left(t-s\right)^{1/2-H}}+\left(\frac{1}{2}-H\right)\int_s^t\frac{\varphi(s)-\varphi(\tau)}{\left(\tau-s\right)^{3/2-H}}\; d\tau\right).
    \end{flalign*} 
    Consequently, for a.e. $t \in (0,T)$ and $s \in (0,t)$,
    \begin{equation}
        d_H s^{\frac{1}{2}-H}D_{t-}^{\frac{1}{2}-H}\left( \langle\partial_{\nu}v_1(t-u),\eta\rangle_{L^2(\Gamma)}g_1(u)u^{H-\frac{1}{2}} \right)(s) = 0.
    \end{equation}
    Since $d_H s^{\frac{1}{2}-H} \ne 0$ for $s \in (0,t)$, it follows that
    \begin{equation*}
        D_{t-}^{\frac{1}{2}-H}\left( \langle\partial_{\nu}v_1(t-u),\eta\rangle_{L^2(\Gamma)}g_1(u)u^{H-\frac{1}{2}} \right)(s) = 0 \quad \text{for a.e. } s \in (0,t).
    \end{equation*}
    For a fixed $t$, let $\varphi_t(u) := \langle\partial_{\nu}v_1(t-u),\eta\rangle_{L^2(\Gamma)}g_1(u)u^{H-\frac{1}{2}}$. Since $\partial_{\nu}v_1(x,0) = 0$, we have $\varphi_t(t) = \langle\partial_{\nu}v_1(0),\eta\rangle_{L^2(\Gamma)}g_1(t)t^{H-\frac{1}{2}} = 0$. Given that $\partial_{\nu}v_1 \in H^1((0,T); L^2(\partial D))$, $g_1 \in C^1([0,T])$ and the mapping $u \mapsto u^{H-\frac{1}{2}}$ is smooth on $[\epsilon, t]$ for any arbitrary $\epsilon > 0$, we conclude that $\varphi_t$ is absolutely continuous on the interval $([\epsilon, t])$. We can express $\varphi_t(u) = -({}_{u}I_{t-}^1 \varphi_t') (u)$ on the interval $[\epsilon, t]$. Applying the fractional integral operator ${}_{u}I_{t-}^{\frac{1}{2}-H}$ and utilizing the semigroup property of fractional integrals, we derive on $[\epsilon, t]$:
    \begin{flalign*}
        {}_{u}I_{t-}^{\frac{1}{2}-H} D_{t-}^{\frac{1}{2}-H}\varphi_t &= -{}_{u}I_{t-}^{\frac{1}{2}-H} D_{t-}^{\frac{1}{2}-H} {}_{u}I_{t-}^1 \varphi_t' = -{}_{u}I_{t-}^{\frac{1}{2}-H} D_{t-}^{\frac{1}{2}-H} {}_{u}I_{t-}^{\frac{1}{2}-H} {}_{u}I_{t-}^{H+\frac{1}{2}} \varphi_t' \\
        &= -{}_{u}I_{t-}^{\frac{1}{2}-H} {}_{u}I_{t-}^{H+\frac{1}{2}} \varphi_t' = -{}_{u}I_{t-}^1 \varphi_t' = \varphi_t.
    \end{flalign*}
    Therefore, $D_{t-}^{\frac{1}{2}-H}\varphi_t = 0$ yields $\varphi_t(s) = 0$ on $[\epsilon, t]$. Since $\epsilon > 0$ is arbitrary, $\varphi_t = 0$ a.e. on $(0,t)$, which means
    \begin{equation*}
        \langle\partial_{\nu}v_1(t-s),\eta\rangle_{L^2(\Gamma)}g_1(s)s^{H-\frac{1}{2}} = 0 \quad \text{for a.e. } s \in (0,t).
    \end{equation*}
    Since $s^{H-\frac{1}{2}} \ne 0$, we obtain $\langle\partial_{\nu}v_1(t-s),\eta\rangle_{L^2(\Gamma)}g_1(s) = 0$ for a.e. $t \in (0,T)$ and $s \in (0,t)$. Following the exact same arguments from this point onward as in Case 2, we derive
     \begin{equation}
        \int_{0}^{t}\langle\partial_{\nu}v_{1}(t-s),\eta\rangle_{L^{2}(\Gamma)}g_{1}(s)ds = 0.\label{eq:v1-integral_2}
    \end{equation}
    and conclude $f_1(x) = 0$ a.e. in $D$.
\end{proof}
\subsection{Uniqueness of recovering \texorpdfstring{$g_i(x)$}{gi(x)}}
Now we are aiming to prove that if the spatial functions $f_i(x)$ are already known, we can recover $g_i(t)$ uniquely on $[0, T]$ with some extra conditions of $f_i$ for $H \in \left(0, 1\right)$. The proof is mainly based on Titchmarsh convolution theorem which characterizes the null spaces of Volterra convolution operator. It plays an important role in proving the uniqueness of recovering $g_2(t)$ and $g_1(t)$.
\begin{theorem}[Titchmarsh convolution theorem]\cite[Theorem VII]{https://doi.org/10.1112/plms/s2-25.1.283} \label{thm_Titchmarsh}
    If $\phi(t)$ and $\psi(t)$ are integrable functions, such that
    \begin{equation*}
        \int_0^t \phi(s) \psi(t - s) \, ds = 0
    \end{equation*}
    holds almost everywhere in the interval $0 < t < T$, then $\phi(t) = 0$ almost everywhere in $(0, \lambda)$, and $\psi(t) = 0$ almost everywhere in $(0, \mu)$, where $\lambda + \mu \ge T$. 
\end{theorem}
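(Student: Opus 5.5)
We prove the theorem by Fourier–Laplace transforms and the exact additivity of mean type for functions of bounded type in a half-plane; this follows Titchmarsh's original complex-analytic route.

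\textbf{Normalisation.} Let $\lambda_0$ and $\mu_0$ be the left endpoints of the essential supports of $\phi$ and $\psi$ in $[0,T]$. The cases where $\phi$ or $\psi$ vanishes a.e.\ on $(0,T)$ are trivial, so assume both are $\le T$. Replace $\phi$ and $\psi$ by $\phi\mathbf 1_{(0,T)}$ and $\psi\mathbf 1_{(0,T)}$. This changes neither the hypothesis on $(0,T)$ nor the conclusion. It then suffices to show $\lambda_0+\mu_0\ge T$ and to take $\lambda=\lambda_0$, $\mu=\mu_0$.

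\textbf{Step 1 (Laplace transforms).} Set $\Phi(z)=\int_0^T\phi(s)e^{-zs}\,ds$ and define $\Psi$ likewise. Both are entire of exponential type and bounded in the closed right half-plane $\{\operatorname{Re}z\ge0\}$. The full convolution $\phi*\psi$ is integrable and supported in $[0,2T]$, and by hypothesis it vanishes a.e.\ on $(0,T)$. Hence
\[
\Phi(z)\Psi(z)=\int_T^{2T}(\phi*\psi)(t)e^{-zt}\,dt ,
\]
so that $|\Phi(x)\Psi(x)|\le Ce^{-Tx}$ for $x>0$.

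\textbf{Step 2 (mean type of each factor).} Consider a function $F$ bounded and analytic in the right half-plane, so of bounded type there. Its mean type is $\sigma_F=\limsup_{x\to+\infty}x^{-1}\log|F(x)|$. I will show $\sigma_\Phi=-\lambda_0$. The bound $\sigma_\Phi\le-\lambda_0$ is immediate, since $|\Phi(x)|\le e^{-\lambda_0x}\|\phi\|_{L^1}$.

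The reverse inequality is the heart of the matter. Suppose $\sigma_\Phi<-\lambda_0-\varepsilon$. Then $e^{(\lambda_0+\varepsilon)z}\Phi(z)$ would have negative mean type in the half-plane. By the Paley–Wiener theorem for half-planes, via the Nevanlinna factorisation $F=B\,e^{-\sigma z}\,G$ with $G$ outer, this would force $\phi$ to vanish a.e.\ on $(\lambda_0,\lambda_0+\varepsilon)$. That contradicts the definition of $\lambda_0$. The same argument gives $\sigma_\Psi=-\mu_0$.

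\textbf{Step 3 (additivity).} For functions of bounded type in a half-plane, mean type is exactly additive: $\sigma_{FG}=\sigma_F+\sigma_G$. This is read off from the canonical factorisation, since Blaschke products and outer factors have zero mean type. It is here that "limsup" in Step 2 is upgraded to a genuine regular growth statement. Combining with Step 1 gives
\[
-\lambda_0-\mu_0=\sigma_\Phi+\sigma_\Psi=\sigma_{\Phi\Psi}\le -T,
\]
that is, $\lambda_0+\mu_0\ge T$, which proves the theorem.

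\textbf{Main obstacle.} The difficult point is the exactness in Steps 2–3. A naive product of limsups only gives the inequality in the wrong direction, because $\Phi$ could in principle be small along some sequence of $x$ and $\Psi$ small along another. I would rely on the Krein/Nevanlinna theory of bounded-type functions to obtain additivity. An alternative real-variable route is Mikusiński's moment lemma: if $\sup_n\bigl|\int_0^T e^{n(T-t)}f(t)\,dt\bigr|<\infty$, then $f=0$ a.e.\ on $(0,T)$. This lemma gives $\phi\psi$-type vanishing first for $\int_0^t s\,\phi(s)\psi(t-s)\,ds$ and then for the convolution of $\phi$ with itself. That route would be my fallback if the half-plane factorisation becomes too heavy to cite.
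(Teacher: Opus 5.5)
The paper gives no proof of this statement; it imports it as a black box from Titchmarsh's 1926 paper, whose argument studies the growth and zeros of the entire functions $\int_0^a e^{zt}\phi(t)\,dt$. Your proposal is a correct modern version of that complex-analytic route, so it supplies what the paper only cites.

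Steps 1 and 2 are sound. For the reverse bound in Step 2 you can even avoid the factorisation. If $|\Phi(x)|\le e^{-(\lambda_0+\varepsilon+\delta)x}$ for large $x$, then $e^{(\lambda_0+\varepsilon)z}\Phi(z)$ is of exponential type and bounded on both the imaginary axis and the positive real axis. Phragm\'en--Lindel\"of in the two quadrants then makes it bounded in the right half-plane. Liouville plus Riemann--Lebesgue then force $\phi=0$ a.e. on $(0,\lambda_0+\varepsilon)$.

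The real content is hidden in Step 3. With the mean type defined as a $\limsup$ on the positive axis, additivity is not literally "read off" from the factorisation. It rests on two facts:
\begin{enumerate}[(i)]
\item the lemma $\limsup_{x\to\infty}x^{-1}\log|B(x)|=0$ for every Blaschke product $B$ of the half-plane, applied to the Blaschke product of $\Phi\Psi$;
\item the elementary limits $x^{-1}\log|S(x)|\to0$ and $x^{-1}\log|O(x)|\to0$, where $O$ is an outer factor and $S$ a singular inner factor carried by the finite boundary.
\end{enumerate}
Your factorisation $F=Be^{-\sigma z}G$ should allow such a factor $S$, though it is harmless here. The Blaschke lemma is standard in the theory of functions of bounded type. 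It can also be checked directly: show $\int_R^{2R}(-\log|B(x)|)\,dx=o(R^2)$ using $\sum_n \operatorname{Re}z_n/(1+|z_n|^2)<\infty$, which is incompatible with $-\log|B(x)|\ge\delta x$ for all large $x$. Note also that you only need the one inequality $\sigma_{\Phi\Psi}\ge\sigma_\Phi+\sigma_\Psi$.

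Compared with the paper's bare citation, your route buys transparency: the theorem becomes the statement that exponential decay rates of Laplace transforms add under products. The price is importing Nevanlinna-type factorisation theory, whose depth is comparable to the theorem itself. Mikusi\'nski's real-variable proof is the self-contained alternative. In its standard form, though, one first treats $\phi*\phi$ and then exploits $t(\phi*\psi)=(t\phi)*\psi+\phi*(t\psi)$, not the order you sketch.
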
 
We now turn to the proof of Theorem \ref{thm_unique-g}.
\begin{proof}
    Let $h_i(t) := \langle\partial_\nu v_i(t), \eta\rangle_{L^2(\Gamma)}$ for an arbitrary test function $\eta \in C^\infty_c(\Gamma)$. As established in the proof of Theorem \ref{thm_unique-f}, the condition $\partial_\nu u(x,t) = 0$ a.s. implies the following two convolution equations
    \begin{align}
        \int_0^t h_2(t-s) g_2(s) ds = 0, \label{eq:con-h2}\\
        \int_0^t h_1(t-s) g_1(s) ds = 0.\label{eq:con-h1}
    \end{align}
    Equation \eqref{eq:con-h2} is derived directly from the expectation \eqref{expectation} and \eqref{eq:con-h1} follows from the variance equations \eqref{eq:v1-integral} for $H>\frac{1}{2}$, and \eqref{eq:v1-integral_2} for $H<\frac{1}{2}$.
    For the critical case $H=\frac{1}{2}$, by the variance \eqref{eq:var=1/2}, since the integrand is non-negative, we deduce that $\langle\partial_\nu v_1(t-s), \eta\rangle_{L^2(\Gamma)} g_1(s) = 0$ a.e. in $(0,t)$, which consequently also yields \eqref{eq:con-h1}.
    
    Since $f_i \in H^2(D)\cap H_0^1(D)$, the hidden regularity of MGT equation solutions ensures $h_i \in H^3(0,T)$. Thus $h_i \in C^2(0, T)$. Recalling the initial conditions of the equation \eqref{MGT_2} that $v_i$ satisfies, we have $h_i(0) = h_i'(0) = 0$, and the second derivative is evaluated as
    \begin{align*}
        \frac{d^2}{dt^2}h_i(0) = \langle\partial_\nu \partial_t^2 v_i(0), \eta\rangle_{L^2(\Gamma)} = \langle\partial_\nu f_i, \eta\rangle_{L^2(\Gamma)}.
    \end{align*}
    Since $\partial_\nu f_i \not\equiv 0$ in $L^2(\Gamma)$ for $i=1,2$, and $C^\infty_c(\Gamma)$ is dense in $L^2(\Gamma)$, we can choose $\eta \in C^\infty_c(\Gamma)$ such that $\frac{d^2}{dt^2}h_i(0) \neq 0$. This implies $h_i(t) \not\equiv 0$ on any interval $(0, \lambda)$. Applying Theorem \ref{thm_Titchmarsh} to \eqref{eq:con-h2} and \eqref{eq:con-h1}, we immediately deduce $g_1(t) = g_2(t) = 0$ a.e. in $(0,T)$. 
\end{proof}
\begin{remark}
    We would like to point out that the assumption $\partial_{\nu}f_i \ne 0$ on $\Gamma$ in $L^2(\Gamma)$ in Theorem \ref{thm_unique-g} is physically and geometrically reasonable. Since $f_i \in H_0^1(D)$, the trace of the spatial source on the boundary is zero ($f_i|_{\Gamma} = 0$). Consequently, the tangential derivative of $f_i$ vanishes on $\Gamma$, implying that the gradient $\nabla f_i$ is completely aligned with the normal vector, i.e., $\nabla f_i(x) = \partial_{\nu}f_i(x)\nu(x)$ for $x \in \Gamma$. Therefore, the assumption $\partial_{\nu}f_i \not\equiv 0$ simply means that the gradient of the spatial source profile does not vanish everywhere on the observation boundary $\Gamma$. From a practical point of view, this ensures that the spatial distribution of the source actively ``pushes'' against the observation boundary, yielding a non-trivial signal to reconstruct the temporal component.
\end{remark}

\section{Acknowledgments}
The authors would like to express their sincere gratitude to Professor Peijun Li and Xv Wang for carefully reading an earlier version of the manuscript and providing constructive comments that improved its presentation.


\bibliographystyle{plain} 
\bibliography{references}  

@article{https://doi.org/10.1002/mma.1576,
author = {Marchand, R. and McDevitt, T. and Triggiani, R.},
title = {An abstract semigroup approach to the third-order {Moore-Gibson-Thompson} partial differential equation arising in high-intensity ultrasound: structural decomposition, spectral analysis, exponential stability},
journal = {Mathematical Methods in the Applied Sciences},
volume = {35},
number = {15},
pages = {1896-1929},
doi = {https://doi.org/10.1002/mma.1576},
url = {https://onlinelibrary.wiley.com/doi/abs/10.1002/mma.1576},
eprint = {https://onlinelibrary.wiley.com/doi/pdf/10.1002/mma.1576},
year = {2012}
}

@book{pazy2012semigroups,
  title={Semigroups of linear operators and applications to partial differential equations},
  author={Pazy, Amnon},
  year={2012},
  publisher={Springer Science \& Business Media}
}

@book{pub.1039910508,
 author={Thom{\'e}e, Vidar},
 date = {2006},
 doi = {10.1007/3-540-33122-0},
 title = {Galerkin Finite Element Methods for Parabolic Problems},
 url = {https://app.dimensions.ai/details/publication/pub.1039910508},
 year = {2006},
 publisher={Springer Science \& Business Media}
}

@article{article,
author = {Kaltenbacher, Barbara and Lasiecka, Irena and Marchand, R.},
year = {2011},
month = {01},
pages = {971-988},
title = {Wellposedness and exponential decay rates for the {Moore-Gibson-Thompson} equation arising in high intensity ultrasound},
volume = {40},
journal = {Control and Cybernetics}
}

@article{L2011GlobalUF,
    title={Global Uniqueness for an Inverse Stochastic Hyperbolic Problem with Three Unknowns},
    author={Qi L{\"u} and Xu Zhang},
    journal = {Communications on Pure and Applied Mathematics},
    volume = {68},
    number = {6},
    pages = {948-963},
    doi = {https://doi.org/10.1002/cpa.21503},
    url = {https://onlinelibrary.wiley.com/doi/abs/10.1002/cpa.21503},
    eprint = {https://onlinelibrary.wiley.com/doi/pdf/10.1002/cpa.21503},
    year = {2015}
}

@article{Bucci2020,
  author  = {Bucci, Francesca and Pandolfi, Luciano},
  title   = {On the regularity of solutions to the {Moore-Gibson-Thompson} equation: a perspective via wave equations with memory},
  journal = {Journal of Evolution Equations},
  year    = {2020},
  volume  = {20},
  number  = {3},
  pages   = {837--867},
  doi     = {10.1007/s00028-019-00549-x},
  url     = {https://doi.org/10.1007/s00028-019-00549-x}
}

@misc{article_Arancibia_2022,
url = {https://doi.org/10.1515/jiip-2020-0090},
title = {An inverse problem for {Moore-Gibson-Thompson} equation arising in high intensity ultrasound},
author = {Rogelio Arancibia and Rodrigo Lecaros and Alberto Mercado and Sebastián Zamorano},
pages = {659--675},
volume = {30},
number = {5},
journal = {Journal of Inverse and Ill-posed Problems},
doi = {10.1515/jiip-2020-0090},
year = {2022},
lastchecked = {2026-08-02}
}

@article{https://doi.org/10.1112/plms/s2-25.1.283,
    author = {CARTWRIGHT, MARY L.},
    title = {THE ZEROS OF CERTAIN INTEGRAL FUNCTIONS},
    journal = {The Quarterly Journal of Mathematics},
    volume = {os-1},
    number = {1},
    pages = {38-59},
    year = {1930},
    month = {01},
    issn = {0033-5606},
    doi = {10.1093/qmath/os-1.1.38},
    url = {https://doi.org/10.1093/qmath/os-1.1.38},
    eprint = {https://academic.oup.com/qjmath/article-pdf/os-1/1/38/4482898/os-1-1-38.pdf}
}

@article{Lassas_2023,
doi = {10.1088/1361-6420/acdab9},
url = {https://doi.org/10.1088/1361-6420/acdab9},
year = {2023},
month = {jun},
publisher = {IOP Publishing},
volume = {39},
number = {8},
pages = {084001},
author = {Lassas, Matti and Li, Zhiyuan and Zhang, Zhidong},
title = {Well-posedness of the stochastic time-fractional diffusion and wave equations and inverse random source problems},
journal = {Inverse Problems}
}

@article{fractionalBrownian,
	author = {Duncan, T. and Maslowski, B. and Pasik-Duncan, Bozenna},
	year = {2011},
	month = {11},
	pages = {},
	title = {Fractional Brownian motion and stochastic equations in Hilbert spaces},
	volume = {02},
	journal = {Stochastics and Dynamics},
	doi = {10.1142/S0219493702000340}
}

@book{biagini2008stochastic,
	title     = {Stochastic Calculus for Fractional Brownian Motion and Applications},
	author    = {Biagini, Francesca and Hu, Yaozhong and Øksendal, Bernt and Zhang, Tusheng},
	year      = {2008},
	publisher = {Springer London},
	address   = {London},
	isbn      = {978-1-84628-797-8},
	doi       = {10.1007/978-1-84628-797-8}
}

@book{jin2021fractional,
	title     = {Fractional Differential Equations},
	author    = {Jin, Bangti},
	year      = {2021},
	publisher = {Springer International Publishing},
	address   = {Cham},
	isbn      = {978-3-030-76043-4},
	doi       = {10.1007/978-3-030-76043-4}
}

@book{nualart2006malliavin,
title = {The Malliavin Calculus and Related Topics},
  author = {Nualart, David},
  year = {2006},
  publisher = {Springer Berlin Heidelberg},
  edition = {2},
  series = {Probability and Its Applications},
  doi = {10.1007/3-540-28329-3},
  isbn = {978-3-540-28328-7}
}

@book{mishura2008stochastic,
  title     = {Stochastic Calculus for Fractional Brownian Motion and Related Processes},
  author    = {Mishura, Yuliya S.},
  year      = {2008},
  publisher = {Springer Berlin Heidelberg},
  doi       = {10.1007/978-3-540-75873-0},
  isbn      = {978-3-54-075872-3}
}

@article{articleExponentialDecay,
author = {Kaltenbacher, Barbara and Lasiecka, Irena},
year = {2012},
month = {01},
pages = {},
title = {Exponential decay for low and higher energies in the third order linear {Moore-Gibson-Thompson} equation with variable viscosity},
volume = {1},
journal = {Palestine Journal of Mathematics [electronic only]}
}

@article{doi:10.2514/8.8418,
author = {MOORE, FRANKLIN K. and GIBSON, WALTER E.},
title = {Propagation of Weak Disturbances in a Gas Subject to Relaxation Effects},
journal = {Journal of the Aerospace Sciences},
volume = {27},
number = {2},
pages = {117-127},
year = {1960},
doi = {10.2514/8.8418},

URL = { 
    
        https://doi.org/10.2514/8.8418
    
    

},
eprint = { 
    
        https://doi.org/10.2514/8.8418
    
    

}

}

@article{thompson1972compressible,
  title={Compressible-fluid dynamics},
  author={Thompson, Philip A and Beavers, Gordon S},
  journal={Journal of Applied Mechanics},
  volume={39},
  number={2},
  pages={},
  year={1972},
  publisher={ASME International},
  url={https://api.semanticscholar.org/CorpusID:262446187}
}

@article{XiaoliFeng2022,
title = {An inverse source problem for the stochastic wave equation},
journal = {Inverse Problems and Imaging},
volume = {16},
number = {2},
pages = {397-415},
year = {2022},
issn = {1930-8337},
doi = {10.3934/ipi.2021055},
url = {https://www.aimsciences.org/article/id/f6b8d6bc-274e-488a-9fab-caeb2b596561},
author = {Xiaoli Feng and Meixia Zhao and Peijun Li and Xu Wang}
}

@article{https://doi.org/10.1002/mma.70574,
author = {Chang, Kuijian and Gong, Yuxuan and Xu, Xiang},
title = {Recovering Initial Values and a Random Source Simultaneously for a Damped Wave Equation},
journal = {Mathematical Methods in the Applied Sciences},
volume = {49},
number = {9},
pages = {9948-9966},
doi = {https://doi.org/10.1002/mma.70574},
url = {https://onlinelibrary.wiley.com/doi/abs/10.1002/mma.70574},
eprint = {https://onlinelibrary.wiley.com/doi/pdf/10.1002/mma.70574},
year = {2026}
}

@article{Feng_2020,
doi = {10.1088/1361-6420/ab6503},
url = {https://doi.org/10.1088/1361-6420/ab6503},
year = {2020},
month = {feb},
publisher = {IOP Publishing},
volume = {36},
number = {4},
pages = {045008},
author = {Feng, Xiaoli and Li, Peijun and Wang, Xu},
title = {An inverse random source problem for the time fractional diffusion equation driven by a fractional Brownian motion},
journal = {Inverse Problems}
}

@article{Tindel2003,
  author = {Tindel, S. and Tudor, C. A. and Viens, F.},
  title = {Stochastic evolution equations with fractional {Brownian} motion},
  journal = {Probability Theory and Related Fields},
  year = {2003},
  volume = {127},
  number = {2},
  pages = {186--204},
  doi = {10.1007/s00440-003-0282-2},
  url = {https://doi.org/10.1007/s00440-003-0282-2},
  issn = {1432-2064}
}

@article{doi:10.1137/21M1429138,
author = {Li, Peijun and Wang, Xu},
title = {An Inverse Random Source Problem for the Biharmonic Wave Equation},
journal = {SIAM/ASA Journal on Uncertainty Quantification},
volume = {10},
number = {3},
pages = {949-974},
year = {2022},
doi = {10.1137/21M1429138},

URL = { 
    
        https://doi.org/10.1137/21M1429138
    
    

},
eprint = { 
    
        https://doi.org/10.1137/21M1429138
    
    

}
}

@article{Fu_2024,
doi = {10.1088/1361-6420/ad49cd},
url = {https://doi.org/10.1088/1361-6420/ad49cd},
year = {2024},
month = {may},
publisher = {IOP Publishing},
volume = {40},
number = {7},
pages = {075001},
author = {Fu, Song-Ren and Yao, Peng-Fei and Yu, Yongyi},
title = {Inverse problem of recovering a time-dependent nonlinearity appearing in third-order nonlinear acoustic equations},
journal = {Inverse Problems}
}

@book{evans2022partial,
  title={Partial differential equations},
  author={Evans, Lawrence C},
  volume={19},
  year={2022},
  publisher={American Mathematical Society}
}

@article{Decreusefond1999,
	author   = {Decreusefond, L. and {\"U}st{\"u}nel, A. S.},
	title    = {Stochastic Analysis of the Fractional Brownian Motion},
	journal  = {Potential Analysis},
	year     = {1999},
	volume   = {10},
	number   = {2},
	pages    = {177--214},
	doi      = {10.1023/A:1008634027843},
	url      = {https://doi.org/10.1023/A:1008634027843},
	issn     = {1572-929X}
}

@book{hille1996functional,
  title={Functional analysis and semi-groups},
  author={Hille, Einar and Phillips, Ralph Saul},
  volume={31},
  year={1996},
  publisher={American Mathematical Society}
}

\end{document}